\DeclareMathOperator{\vbl}{\mathsf{vbl}}
\DeclareMathOperator{\dom}{\mathsf{dom}}
\newcommand{\func}{\zeta}
\title{The Local Action Lemma}
\date{}
\author{Anton Bernshteyn\thanks{Supported by the Illinois Distinguished Fellowship.}\\ University of Illinois at Urbana-Champaign}
\newtheorem{theo}{Theorem}[section]
\newtheorem{lemma}[theo]{Lemma}
\newtheorem{corl}[theo]{Corollary}
\newtheorem{conj}[theo]{Conjecture}
\newtheorem{claim}[theo]{Claim}
\theoremstyle{definition}
\newtheorem{defn}[theo]{Definition}
\theoremstyle{remark}
\newtheorem{remk}[theo]{Remark}
\def\keywords{\vspace{.2em}
{\textit{Keywords}:\,\relax%
}}
\def\endkeywords{\par}
\newcommand{\beauty}[1]{\mathsf{#1}}
\begin{document}
	\maketitle
	
	\begin{abstract}
		The Lov\'{a}sz Local Lemma is a very powerful tool in probabilistic combinatorics, that is often used to prove existence of combinatorial objects satisfying certain constraints. Moser and Tardos \cite{Moser} have shown that the LLL gives more than just pure existence results: there is an effective randomized algorithm that can be used to find a desired object. In order to analyze this algorithm Moser and Tardos developed the so-called \emph{entropy compression method}. It turned out that one could obtain better combinatorial results by a direct application of the entropy compression method rather than simply appealing to the LLL. We provide a general statement that implies both these new results and the LLL itself.
	\end{abstract}
	
	\keywords
		the Lov\'{a}sz Local Lemma; the entropy compression method.
	\endkeywords
	
	\tableofcontents
	
	\section{Introduction}
	
	Suppose that we are given a family $\beauty{A}$ of \emph{bad} random events in some probability space. How can we show that with positive probability none of the events $\beauty{A}$ happen? If the events $\beauty{A}$ are mutually independent, then $\Pr\left(\bigcap_{A\in\beauty{A}}\overline{A}\right)=\prod_{A\in\beauty{A}}\left(1-\Pr(A)\right)$, which is positive if the individual probabilities of all events $A\in\beauty{A}$ are less than $1$. Although the events under consideration are not usually independent, their dependence is often limited or structured somehow. The famous Lov\'{a}sz Local Lemma (the LLL for short) provides a useful way to capture this limitation.
	
	\begin{theo}[Lov\'{a}sz Local Lemma \cite{AS}]
		Let $\beauty{A}$ be a finite set of random events in a probability space $\Omega$. For $A\in\beauty{A}$ let $\Gamma(A)$ be a subset of $\beauty{A}\setminus\{A\}$ such that $A$ is independent from the $\sigma$-algebra generated by $\beauty{A}\setminus(\Gamma(A)\cup\{A\})$. Suppose that there exists an assignment of reals $\mu:\beauty{A}\to[0;1)$ such that for every $A\in\beauty{A}$ we have
		\begin{equation}\label{eq:LLL}
			\Pr(A)\leq \mu(A) \prod_{B\in \Gamma(A)}(1-\mu(B)).
		\end{equation}
		Then $\Pr\left(\bigcap_{A\in\beauty{A}}\overline{A}\right) > 0$.
	\end{theo}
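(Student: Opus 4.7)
The plan is to strengthen the conclusion to the inductive statement that for every $A \in \beauty{A}$ and every $S \subseteq \beauty{A}\setminus\{A\}$ with $\Pr\bigl(\bigcap_{B\in S}\overline{B}\bigr) > 0$,
\[
\Pr\Bigl(A \,\Big|\, \bigcap_{B\in S}\overline{B}\Bigr) \leq \mu(A).
\]
Once this is established, the theorem follows by the chain rule: enumerating $\beauty{A} = \{A_1,\ldots,A_n\}$ arbitrarily,
\[
\Pr\Bigl(\bigcap_{i=1}^n \overline{A_i}\Bigr) = \prod_{i=1}^n \Pr\Bigl(\overline{A_i} \,\Big|\, \bigcap_{j<i}\overline{A_j}\Bigr) \geq \prod_{i=1}^n (1-\mu(A_i)) > 0.
\]

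I would prove the inductive statement by induction on $|S|$. The base case $S=\emptyset$ is immediate from hypothesis \eqref{eq:LLL}, since every factor $1-\mu(B)$ lies in $(0,1]$. For the inductive step, partition $S = S_1 \sqcup S_2$ with $S_1 = S \cap \Gamma(A)$ and $S_2 = S \setminus \Gamma(A)$, and write
\[
\Pr\Bigl(A \,\Big|\, \bigcap_{B\in S}\overline{B}\Bigr) = \frac{\Pr\bigl(A \cap \bigcap_{B\in S_1}\overline{B} \,\big|\, \bigcap_{B\in S_2}\overline{B}\bigr)}{\Pr\bigl(\bigcap_{B\in S_1}\overline{B} \,\big|\, \bigcap_{B\in S_2}\overline{B}\bigr)}.
\]
The numerator is at most $\Pr(A \mid \bigcap_{B\in S_2}\overline{B})$, which equals $\Pr(A) \leq \mu(A)\prod_{B\in\Gamma(A)}(1-\mu(B))$ because every event in $S_2$ is measurable with respect to the $\sigma$-algebra from which $A$ is independent. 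Enumerating $S_1 = \{B_1,\ldots,B_r\}$, the denominator factors as $\prod_{i=1}^r \Pr(\overline{B_i} \mid \bigcap_{j<i}\overline{B_j}\cap\bigcap_{B\in S_2}\overline{B})$, and each factor is at least $1-\mu(B_i)$ by the inductive hypothesis applied to a conditioning set of size $|S|-1$. Since $S_1 \subseteq \Gamma(A)$, the product $\prod_{B\in S_1}(1-\mu(B))$ in the lower bound for the denominator cancels part of the product in the upper bound for the numerator, leaving the desired bound $\mu(A)$.

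The main delicate point is to keep track of positivity of the conditioning events so that every ratio above is well-defined, but this is self-supplied by the induction: the lower bound $\Pr(\overline{B_i}\mid\cdots)\geq 1-\mu(B_i) > 0$ guarantees that the intermediate denominators never vanish, and this propagates to the final chain-rule computation as well. Conceptually, the only substantive use of the dependency structure $\Gamma$ occurs in bounding the numerator via the independence assumption; the weights $\mu$ and the slack factors $(1-\mu(B))$ exist precisely to absorb the loss incurred when partially reconditioning on the events indexed by $S_1$.
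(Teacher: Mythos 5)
Your proof is correct: it is the classical inductive proof of the LLL, strengthening the conclusion to the conditional bound $\Pr(A \mid \bigcap_{B\in S}\overline{B}) \leq \mu(A)$ for all $S \subseteq \beauty{A}\setminus\{A\}$ with positive conditioning probability, and then finishing by the chain rule. The positivity bookkeeping you flag as delicate is in fact automatic: each intermediate conditioning event $\bigcap_{j<i}\overline{B_j}\cap\bigcap_{B\in S_2}\overline{B}$ contains $\bigcap_{B\in S}\overline{B}$ and so inherits positive probability directly, without needing the bootstrap you describe; the bootstrap is only needed once, at the very end, to keep the chain-rule factors well-defined. The one place your wording is slightly off is ``a conditioning set of size $|S|-1$'': the conditioning sets $\{B_1,\ldots,B_{i-1}\}\cup S_2$ have size $|S_2|+i-1 \leq |S|-1$, which can be strictly smaller, so the induction must run over all sizes $< |S|$; this is a cosmetic imprecision, not a gap. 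However, your route is genuinely different from the paper's. The paper does not prove Theorem~\ref{eq:LLL} directly at all; it cites it from \cite{AS} and instead, in Section~\ref{subsec:LLL}, derives the strictly stronger Lopsided Lovász Local Lemma (Theorem~\ref{theo:LopLLL}) as a special case of the Local Action Lemma (Theorem~\ref{theo:LAL}), setting up $\mathcal{X}=\mathsf{PF}(\beauty{A},\{0,1\})$ with the restriction action of $\mathcal{P}(\beauty{A})$, choosing $g_A(f)=\Gamma(A)$, and taking $\func(A)=1/(1-\mu(A))$. Your proof is shorter, more elementary, and self-contained, but uses the full independence hypothesis at the numerator step ($\Pr(A\mid\bigcap_{B\in S_2}\overline{B})=\Pr(A)$), so it only establishes the symmetric LLL; the paper's approach buys the stronger lopsided version for free and, more importantly for the paper's purposes, shows that the LLL fits inside the LAL framework, which is the point being made there.
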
	
	
	Note that the condition (\ref{eq:LLL}) employs only ``local'' information about an event $A$, i.e. it refers only to the events in $\Gamma(A)$ (hence the name ``Local Lemma''). Therefore, the LLL can be applied even if the whole dependency structure of $\beauty{A}$ is unknown or very hard to describe (which is almost always the case). This property allowed the LLL to become one of the most important tools in probabilistic combinatorics.
	
	In their paper \cite{Bissacot} Bissacot \emph{et al.} strengthened the LLL by replacing the condition (\ref{eq:LLL}) with a weaker (and more complicated) one. This Improved LLL was successfully applied in \cite{Ndreca} by Ndreca \emph{et al.} to improve many combinatorial results previously obtained using the LLL. Another improvement to the LLL was made by Kolipaka \emph{et al.} \cite{Kolipaka}, who established a series of LLL-like statements that require more and more information about the dependency structure of $\beauty{A}$. One of these statements, the so-called Clique LLL, was applied in \cite{Kolipaka} to improve bounds for the acyclic edge coloring problem and the non-repetitive coloring problem.
	
	In most applications of the LLL the events under consideration are determined by a set of mutually independent discrete random variables $\beauty{V}$. If each event $A \in \beauty{A}$ is determined by a subset $\vbl(A) \subseteq \beauty{V}$, then we can let $\Gamma(A)$ to be $\{B \in \beauty{A}\setminus\{A\}\,:\,\vbl(B)\cap\vbl(A) \neq \emptyset\}$. This setting for the LLL is called the \emph{variable framework}. A major breakthrough was made by Moser and Tardos \cite{Moser}, who showed that in the variable framework there exists a simple Las Vegas algorithm with expected polynomial runtime that searches the probability space for a point which avoids all the bad events. It was later shown by Pegden \cite{Pegden} and Kolipaka and Szegedy \cite{Kolipaka1} that this algorithm is efficient even under the conditions weaker than those of the LLL.
	
	In order to analyze this algorithm Moser and Tardos developed the so-called \emph{entropy compression method}. Roughly speaking, the method consists in two main stages. First, we provide some way to encode an execution process of the algorithm so that the outcomes of all random choices performed by the algorithm can be uniquely recovered from the resulting encoding.  On the second stage we use the structure of this encoding to show that if the expected runtime of the algorithm were unbounded then this encoding would losslessly compress the original random data while reducing its Shannon entropy, which is impossible. 
		
	It was discovered lately (and somewhat unexpectedly) that one can obtain better combinatorial results by a \emph{direct application} of the entropy compression method rather than simply appealing to the LLL. The idea was to construct a randomized procedure that solved a particular combinatorial problem (instead of proving the LLL in general) and then apply an entropy compression argument to show that this procedure had expected finite runtime. Examples can be found in \cite{Duj}, \cite{Esperet}, \cite{Goncalves} etc., and some of them are discussed in more details in Section \ref{sec:applications}.
	
	Note that the entropy compression method is indeed a ``method'' that one can use to attack a problem rather than a general theorem that contains various combinatorial results as its special cases. Our goal here is to fill this gap and provide a generalization of the LLL, that, in particular, implies the new combinatorial results obtained using the entropy compression method. We formulate the new lemma in Section \ref{sec:LAL} and give a proof for it in Section \ref{sec:proof}. Section \ref{sec:applications} is devoted to its applications, highlighting different aspects of using the lemma. There we show how entropy compression proofs can be ``translated'' into the language of probability and then derived from the lemma, and why the LLL is a particular case of it. Some of the examples given in Section \ref{sec:applications} have less combinatorial flavor. They are intended to show that our lemma can be applied in situations far from the usual LLL-type results. 
	
	\section{The Local Action Lemma}\label{sec:LAL}
	
	In this section we present the main result of this paper. We call it the \emph{Local Action Lemma} (the LAL for short) for the reasons that will become apparent after the statement of the theorem.
	
	Recall that a \emph{monoid} is an algebraic structure with a single associative binary operation and an identity element. We will use the multiplicative notation for the monoid operation, and denote its identity element by $1$. For a monoid $M$, a subset $B \subseteq M$ is called a \emph{generating set} of $M$ if $M$ is the smallest set containing $B$ that is closed under the monoid operation. If $B$ is a fixed generating set of $M$, then we refer to the elements of $B$ as the \emph{generators} of $M$.
	
	A (left) \emph{action} of a monoid $M$ on a set $\mathcal{X}$ is a map $\varphi : M\times \mathcal{X} \to \mathcal{X}$ that is compatible with the monoid operation, i.e. 
	$$\varphi(\alpha\beta, x) = \varphi(\alpha, \varphi(\beta, x)),$$
	and
	$$\varphi(1, x) = x$$
	for all $\alpha$, $\beta \in M$ and $x \in \mathcal{X}$. If a monoid action $\varphi$ is fixed, then we say that $M$ \emph{acts} on $\mathcal{X}$ and use the notation $\alpha.x$ for $\varphi(\alpha, x)$.
	
	Let $M$ be a monoid with a generating set $B$. If $\func : B \to \mathbb{R}_+$, then for each element $\alpha \in M$ let
	\begin{equation}\label{eq:underline}
		\underline{\func}(\alpha) \coloneqq \inf \left\{\prod_{i=1}^k\func(\beta_i) \, : \, \beta_1, \ldots, \beta_k \in B,\; \prod_{i=1}^k \beta_i = \alpha\right\}.
	\end{equation}
	In most applications it is safe to assume that the infimum in (\ref{eq:underline}) is attained at some particular $\beta_1$, \ldots, $\beta_k \in B$. Nevertheless, that may not be the case if $B$ is infinite, which will lead to some technical issues in the proof. 
	
	Suppose that $\mathcal{X}$ is a non-empty set equipped with a $\sigma$-algebra $\Sigma$. If a monoid $M$ acts on $\mathcal{X}$, then this action is \emph{measurable} if for every $\alpha \in M$ the map $x \mapsto \alpha.x$ is measurable. We also use the following notational convention for conditional probabilities: If $F$ is a random event and $\Pr(F) = 0$, then $\Pr(E\vert F) = 0$ for all events $E$. Now we are ready to state the LAL.
	
	\begin{theo}[Local Action Lemma]\label{theo:LAL}
		Suppose that $\mathcal{X}$ is a non-empty set equipped with a $\sigma$-algebra $\Sigma$, and let $\mathcal{G} \in \Sigma$. Let $M$ be an at most countable monoid with a generating set $B \subseteq M$. Suppose that $M$ acts measurably on $\mathcal{X}$, and for every $x \in \mathcal{G}$ and $\alpha \in M$ we have $\alpha.x \in \mathcal{G}$.
		
		Let $\Omega$ be a probability space, and let $X : \Omega \to \mathcal{X}$ be a random variable. For every $x \in \mathcal{X}\setminus\mathcal{G}$ and $\beta \in B$ such that $\beta.x \in \mathcal{G}$ choose an arbitrary element $g_\beta(x) \in M$ in such a way that the maps $x \mapsto g_\beta(x)$ are measurable. For $\beta \in B$ and $\alpha \in M$ define
		$$P(\beta, \alpha) \coloneqq \sup_{\gamma \in M}
			\Pr\left(g_\beta(\gamma . X) = \alpha \,\middle\vert\, \alpha\beta\gamma . X \in \mathcal{G}\right).$$
	
		Suppose that for some $\alpha \in M$ we have $\Pr(\alpha. X \in \mathcal{G}) > 0$. If there exists a function $\func : B \to \mathbb{R}_+$ such that for every $\beta \in B$ we have
		\begin{equation}\label{eq:LAL}
			\func(\beta)\geq 1 + \sum_{\alpha \in M} P(\beta, \alpha) \underline{\func}(\alpha\beta),
		\end{equation}
		then $\Pr(X \in \mathcal{G}) > 0$.
	\end{theo}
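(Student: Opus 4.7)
My plan is to prove the following quantitative strengthening:
$$\Pr(\alpha.X \in \mathcal{G}) \;\leq\; \underline{\func}(\alpha)\cdot\Pr(X \in \mathcal{G}) \qquad \text{for every } \alpha \in M. \tag{$\star$}$$
Granted $(\star)$, the theorem follows immediately: since $B$ generates $M$, the element $\alpha_0$ hypothesized to satisfy $\Pr(\alpha_0. X \in \mathcal{G})>0$ admits at least one finite product representation, so $\underline{\func}(\alpha_0)<\infty$, and thus $\Pr(X \in \mathcal{G}) \geq \Pr(\alpha_0.X \in \mathcal{G})/\underline{\func}(\alpha_0) > 0$. The first concrete step will be to derive a one-step recursion. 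For a generator $\beta \in B$ and an arbitrary $\gamma \in M$, I will split the event $\{\beta\gamma.X \in \mathcal{G}\}$ according to whether $\gamma.X \in \mathcal{G}$ (contributing exactly $\Pr(\gamma.X \in \mathcal{G})$ by the action-invariance of $\mathcal{G}$), and otherwise partition by the value $\alpha' \in M$ of $g_\beta(\gamma.X)$, which is well-defined precisely on $\{\gamma.X \notin \mathcal{G}\}\cap\{\beta\gamma.X\in \mathcal{G}\}$. Action-invariance gives $\{g_\beta(\gamma.X)=\alpha'\}\subseteq\{\alpha'\beta\gamma.X\in\mathcal{G}\}$, and the definition of $P(\beta,\alpha')$ as a supremum of conditional probabilities yields
$$\Pr(\beta\gamma.X\in\mathcal{G}) \;\leq\; \Pr(\gamma.X\in\mathcal{G}) + \sum_{\alpha'\in M} P(\beta,\alpha')\,\Pr(\alpha'\beta\gamma.X\in\mathcal{G}).$$

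Next, I would derive $(\star)$ from this one-step recursion using a decomposition of $\alpha$. Fix any representation $\alpha = \beta_1\beta_2\cdots\beta_k$ with $\beta_i\in B$. Applying the recursion to the head symbol and then iterating on the resulting ``base'' terms $\Pr(\beta_i\cdots\beta_k.X\in\mathcal{G})$, while simultaneously expanding the ``branching'' terms $\Pr(\alpha'\beta_i\cdots\beta_k.X\in\mathcal{G})$, produces after sufficiently many unfoldings an upper bound of the form $\Pr(X\in\mathcal{G})\cdot W + R$, where $W$ is a sum over a tree of expansion histories weighted by products of $P(\beta,\alpha')$-factors, and $R$ is a residual term. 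Condition (\ref{eq:LAL}), combined with the sub-multiplicativity $\underline{\func}(\alpha\beta)\leq \underline{\func}(\alpha)\underline{\func}(\beta)$, is exactly what is needed to show that the weight collected at each branching step is at most $\func(\beta_i)$, so that $W\leq\prod_i\func(\beta_i)$; taking the infimum over all decompositions of $\alpha$ collapses the bound to $\underline{\func}(\alpha)\Pr(X\in\mathcal{G})$.

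The hard part will be closing this induction cleanly, because the one-step recursion replaces $\Pr(\beta\gamma.X\in\mathcal{G})$ by probabilities at monoid elements $\alpha'\beta\gamma$ that are \emph{not} shorter than $\beta\gamma$ in any obvious ordering, so a straight induction on decomposition length fails. The two natural remedies are (a) a truncation argument in which the recursion is unfolded to depth $n$ and one shows the residual $R_n$ tends to zero as $n\to\infty$, using the convergence of the series in (\ref{eq:LAL}) and the finiteness of the $\func$-values to apply dominated convergence; or (b) a fixed-point/supersolution argument verifying directly that $\alpha\mapsto\underline{\func}(\alpha)\Pr(X\in\mathcal{G})$ dominates any non-negative solution of the one-step inequality. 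A subsidiary technicality, arising when $B$ is infinite, is that the infimum in the definition of $\underline{\func}$ may not be attained; this I would handle by working throughout with $\varepsilon$-near-optimal decompositions and sending $\varepsilon\to 0$ at the end.
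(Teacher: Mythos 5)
Your one-step recursion is correct and is a genuine and useful observation; conditioning on whether $\gamma.X\in\mathcal{G}$ and then on the value of $g_\beta(\gamma.X)$ does give
$\Pr(\beta\gamma.X\in\mathcal{G}) \le \Pr(\gamma.X\in\mathcal{G}) + \sum_{\alpha'} P(\beta,\alpha')\Pr(\alpha'\beta\gamma.X\in\mathcal{G})$.
But the ``hard part'' you flag at the end is not a routine technicality, and neither of your two proposed remedies closes it as sketched.

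For remedy (a), note that when you expand a frontier node of weight $w$, it is replaced by children of total weight $w\bigl(1+\sum_{\alpha'}P(\beta,\alpha')\bigr)$, and $\sum_{\alpha'}P(\beta,\alpha')$ is in general strictly positive. So the frontier weight (hence the residual $R_n$, even after bounding the $q$-values by $1$) typically grows with $n$; the inequality $q(\alpha)\le W_n q(1)+R_n$ gets \emph{looser}, not tighter, as you unfold. Convergence of the series in (\ref{eq:LAL}) controls the sum of weights over excursions that \emph{return} to the starting depth (this is the content of the paper's Lemma \ref{lemma:meanbound}), but says nothing about the weight sitting on paths that keep branching outward forever. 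For remedy (b), the supersolution check would require $\underline{\func}(\beta\gamma)\ge\underline{\func}(\gamma)+\sum_{\alpha'}P(\beta,\alpha')\underline{\func}(\alpha'\beta\gamma)$; submultiplicativity plus (\ref{eq:LAL}) only yields this with $\func(\beta)\underline{\func}(\gamma)$ on the left, which can be strictly larger than $\underline{\func}(\beta\gamma)$ because the optimal decomposition of $\beta\gamma$ need not start with $\beta$. And even if the pointwise inequality held, you would still need a minimality argument for $q$ among solutions of the recursion, which circles back to the same non-termination issue as in (a).

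The paper sidesteps all of this by arguing by contradiction: it assumes $\Pr(X\in\mathcal{G})=0$, builds an explicit random walk on $B^*$, and---crucially---chooses the starting word $w_0$ of \emph{minimal length} subject to $\Pr(w_0.X\in\mathcal{G})>0$. Minimality confines the walk to lengths $\ge|w_0|$, the hypothesis $\Pr(X\in\mathcal{G})=0$ forces the walk to run forever, and counting the prefixes ending at branches gives $N+1$ against the $N$-independent bound of Lemma \ref{lemma:meanbound}. In other words, the paper's contradiction framing is not cosmetic: it is precisely the device that makes the outward-branching paths (your non-terminating residual) into the \emph{source} of the contradiction rather than an obstacle to be bounded. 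Your excursion-weight bound would still be needed (it is essentially Lemma \ref{lemma:meanbound}), so you should expect the difficulty to resurface there; but the residual estimate you would need on top of that is the genuine gap, and I do not see how your sketch produces it.
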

	
	\section{Proof of the LAL}\label{sec:proof}
	
	Let $B^*$ be the free monoid on the set $B$. Then there exists a unique homomorphism $\tau : B^* \to M$ such that $\tau(\beta) = \beta$ for all $\beta \in B$. If $w = \beta_1 \ldots \beta_k \in B^*$, then let
	$$\func(w) \coloneqq \prod_{i=1}^k\func(\beta_i).$$
	Hence for every $\alpha \in M$ we have
	$$\underline{\func}(\alpha) = \inf_{w \in \tau^{-1}(\alpha)} \func(w).$$
	
	Note that the action of $M$ on $\mathcal{X}$ induces an action of $B^*$ on $\mathcal{X}$, namely if $w \in B^*$ and $x \in \mathcal{X}$, then~$w.x \coloneqq \tau(w).x$.
	
	Suppose that $\Pr(X \in \mathcal{G}) = 0$. For every $\alpha \in M$ and for every $\varepsilon > 0$ fix an element $\tau^{-1}_\varepsilon (\alpha) \in \tau^{-1}(\alpha)$ such that $\func(\tau^{-1}_\varepsilon(\alpha)) < (1+\varepsilon)\underline{\func}(\alpha)$. Consider the following randomized process, that generates an infinite random sequence $(w_0, w_1, w_2, \ldots)$ of elements of $B^*$ such that $\Pr(w_k.X \in \mathcal{G}) > 0$ for every index~$k$, and an infinite random sequence $(\alpha_1, \alpha_2, \ldots)$ of elements of $M \cup \{*\}$ (where $*\not\in M$).
	
	Let $w_0 \in B^*$ be such that $\Pr(w_0 . X \in \mathcal{G}) > 0$ (the existence of such $w_0$ is guaranteed by the conditions of the LAL). Now suppose that $(w_0, \ldots, w_{k-1})$ and $(\alpha_1, \ldots, \alpha_{k-1})$ are already defined. Note that $w_{k-1} \neq 1$, because $\Pr(X \in \mathcal{G}) = 0$. So let $w_{k-1} = \beta v$ for some $\beta \in B$ and $v \in B^*$. Choose a random element $x \in \{y \in \mathcal{X}\, :\, \beta.y \in \mathcal{G}\}$ according to the distribution of $v.X$ conditioned on the event $\{\beta v.X \in \mathcal{G}\}$. If $x \in \mathcal{G}$, then $\Pr(v.X \in \mathcal{G}) > 0$, so we can let $w_k \coloneqq v$ and $\alpha_k \coloneqq *$. Otherwise take $\alpha_k \coloneqq g_\beta (x)$. Choose some positive value for $\varepsilon$ (we will discuss the way of choosing it later). Observe that
	$$\Pr(\tau^{-1}_\varepsilon(\alpha_k \beta) v.X \in\mathcal{G}) = \Pr(\alpha_k\beta\tau(v).X \in \mathcal{G}) \geq \Pr(\beta v.X \in \mathcal{G}) > 0,$$
	so we can let $w_k \coloneqq \tau^{-1}_\varepsilon(\alpha_k \beta) v$. 
	
	\begin{defn}
		Let $(w_0, \ldots, w_n)$ be a sequence of elements of $B^*$ such that $\Pr(w_k.X \in \mathcal{G}) > 0$ for every $0\leq k\leq n$, and let $(\alpha_1, \ldots, \alpha_n)$ be a sequence of elements of $M \cup \{*\}$. For every $1 \leq k \leq n$ let $w_{k-1} = \beta_k v_k$, where $\beta_k \in B$ and $v_k \in B^*$. Suppose that for every $1 \leq k \leq n$ either $\alpha_k = *$ and $w_k = v_k$, or $\alpha_k \in M$ and $w_k = u_k v_k$ for some $u_k \in \tau^{-1}(\alpha_k \beta_k)$. Then the pair of sequences $(w_0, \ldots, w_n)$, $(\alpha_1, \ldots, \alpha_n)$ is called \emph{admissible}. We will usually write $(w_0, \ldots, w_n; \alpha_1, \ldots, \alpha_n)$ to denote an admissible pair.
	\end{defn}
	
	It is clear that the first entries of the sequences produced by the algorithm described above always form an admissible pair. Lemma \ref{lemma:prbound} gives an upper bound on the probability for a given admissible pair to be produced in this way.
	
	\begin{lemma}\label{lemma:prbound}
		Let $(w_0, \ldots, w_n; \alpha_1, \ldots, \alpha_n)$ be an admissible pair. Then the probability that the first entries of the sequences produced by the algorithm described above are $(w_0, \ldots, w_n)$ and $(\alpha_1, \ldots, \alpha_n)$ is at most
			\begin{equation}\label{eq:prbound}
				\frac{\Pr(w_n.X \in \mathcal{G})}{\Pr(w_0.X\in\mathcal{G})} \prod_{\substack{1\leq k \leq n:\\ \alpha_k \in M}} P(\beta_k, \alpha_k).
			\end{equation}
			In particular, this probability is at most
			\begin{equation*}
				\frac{1}{\Pr(w_0.X\in\mathcal{G})} \prod_{\substack{1\leq k \leq n:\\ \alpha_k \in M}} P(\beta_k, \alpha_k).
			\end{equation*}			
	\end{lemma}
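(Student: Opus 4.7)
The plan is to induct on $n$, obtaining per-step bounds on the conditional probability $p_n$ that step $n$ of the algorithm outputs $(w_n, \alpha_n)$ given the first $n-1$ outputs, and then to telescope. For $n = 0$ the sequence is deterministic and the claim is trivial. Because distinct iterations of the algorithm sample independently, the probability of producing the full admissible pair factors as $\prod_{k=1}^n p_k$, so it will suffice to establish
$$p_n \;\leq\; \frac{\Pr(w_n.X \in \mathcal{G})}{\Pr(w_{n-1}.X \in \mathcal{G})} \quad \text{if } \alpha_n = *, \qquad p_n \;\leq\; P(\beta_n,\alpha_n)\,\frac{\Pr(w_n.X \in \mathcal{G})}{\Pr(w_{n-1}.X \in \mathcal{G})} \quad \text{if } \alpha_n \in M;$$
the denominator chain then telescopes across $k = 1,\ldots,n$ into the single ratio $\Pr(w_n.X\in\mathcal{G})/\Pr(w_0.X\in\mathcal{G})$ appearing in \eqref{eq:prbound}.

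To obtain the per-step bounds I write $w_{n-1} = \beta_n v_n$, so the sample $x$ drawn at step $n$ has the law of $v_n.X$ conditioned on $\beta_n v_n.X \in \mathcal{G}$. When $\alpha_n = *$, step $n$ succeeds precisely when $x \in \mathcal{G}$; the $M$-invariance of $\mathcal{G}$ gives $\{v_n.X \in \mathcal{G}\}\subseteq\{\beta_n v_n.X \in \mathcal{G}\}$, whence $p_n = \Pr(v_n.X\in\mathcal{G})/\Pr(\beta_n v_n.X\in\mathcal{G}) = \Pr(w_n.X\in\mathcal{G})/\Pr(w_{n-1}.X\in\mathcal{G})$ because $w_n = v_n$. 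When $\alpha_n \in M$, either the admissible $w_n$ differs from the algorithm's output $\tau^{-1}_\varepsilon(\alpha_n\beta_n)v_n$, making $p_n = 0$, or they agree and $p_n = \Pr(g_{\beta_n}(v_n.X) = \alpha_n\mid\beta_n v_n.X\in\mathcal{G})$. The definition of $g_{\beta_n}$ forces $\{g_{\beta_n}(y) = \alpha_n\}\subseteq \{\beta_n.y\in\mathcal{G}\}$, so the conditioning drops from the numerator; substituting $\gamma = \tau(v_n)$ into the definition of $P$ then yields
$$\Pr(g_{\beta_n}(v_n.X) = \alpha_n) \;\leq\; P(\beta_n,\alpha_n)\,\Pr(\alpha_n\beta_n v_n.X \in \mathcal{G}) \;=\; P(\beta_n,\alpha_n)\,\Pr(w_n.X\in\mathcal{G}),$$
and dividing by $\Pr(w_{n-1}.X\in\mathcal{G})$ finishes the case. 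The ``in particular'' clause follows from $\Pr(w_n.X\in\mathcal{G})\leq 1$.

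The subtlest point, I expect, is recognizing that the mismatch between the conditioning $\beta_n v_n.X\in\mathcal{G}$ used by the algorithm and the conditioning $\alpha_n\beta_n\gamma.X\in\mathcal{G}$ built into $P$ supplies exactly the ``extra'' factor $\Pr(w_n.X\in\mathcal{G})/\Pr(w_{n-1}.X\in\mathcal{G})$ that is needed for the per-step ratios to telescope uniformly across both $*$-steps and $M$-steps. The remaining work is routine bookkeeping: admissibility guarantees $\Pr(w_k.X\in\mathcal{G}) > 0$ for every $k\leq n$, the invariance of $\mathcal{G}$ gives $\Pr(\alpha_n\beta_n v_n.X\in\mathcal{G})\geq \Pr(\beta_n v_n.X\in\mathcal{G}) > 0$, and the measurability assumptions on the action and on the maps $g_\beta$ ensure that all events considered lie in $\Sigma$, so every conditional probability above is well defined.
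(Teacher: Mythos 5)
Your proof is correct and takes essentially the same approach as the paper: the per-step case split on $\alpha_n = *$ versus $\alpha_n \in M$, the observation that $\{g_{\beta_n}(v_n.X) = \alpha_n\}$ is contained in $\{\alpha_n\beta_n v_n.X \in \mathcal{G}\}$ so that the conditioning can be re-inserted and matched to the definition of $P(\beta_n,\alpha_n)$ via $\gamma = \tau(v_n)$, and the telescoping of the ratios $\Pr(w_k.X\in\mathcal{G})/\Pr(w_{k-1}.X\in\mathcal{G})$ are exactly the paper's inductive argument recast as a product over steps. The only cosmetic difference is that you phrase the factorization as ``independent sampling'' where the chain rule for conditional probabilities is really what is being invoked, and you are slightly more explicit than the paper in noting that $p_n = 0$ when the admissible $w_n$ differs from the algorithm's deterministic choice $\tau^{-1}_\varepsilon(\alpha_n\beta_n)v_n$.
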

	\begin{proof}
		The proof is by induction on $n$. If $n = 0$, then (\ref{eq:prbound}) is just equal to $1$, so the statement is obvious in this case. Now suppose that the probability that the first entries of the sequences produced by the algorithm are $(w_0, \ldots, w_{n-1})$ and $(\alpha_1, \ldots, \alpha_{n-1})$ is at most
		$$\frac{\Pr(w_{n-1}.X \in \mathcal{G})}{\Pr(w_0.X\in\mathcal{G})} \prod_{\substack{1\leq k \leq n-1:\\ \alpha_k \in M}} P(\beta_k, \alpha_k).$$
		
		Denote by $x$ the element that was randomly chosen on the $n$'th step of the algorithm from $\{y \in \mathcal{X}\, :\, \beta_n.y \in \mathcal{G}\}$ according to the distribution of $v_n.X$ conditioned on the event $\{\beta_n v_n.X \in \mathcal{G}\}$. Consider the two cases. 
		
		{\sc Case 1:} $\alpha_n = *$. It means that $x \in \mathcal{G}$. The probability of this is
		$$\Pr(v_n.X \in \mathcal{G}\,\vert\,\beta_n v_n.X \in \mathcal{G}) = \frac{\Pr(v_n.X \in \mathcal{G})}{\Pr(\beta_n v_n.X \in \mathcal{G})} = \frac{\Pr(w_n.X \in \mathcal{G})}{\Pr(w_{n-1}.X \in \mathcal{G})}.$$
		Therefore, the probability that the first entries of the sequences produced by the algorithm are $(w_0, \ldots, w_n)$ and $(\alpha_1, \ldots, \alpha_n)$ is at most
		\begin{align*}
			\frac{\Pr(w_n.X \in \mathcal{G})}{\Pr(w_{n-1}.X \in \mathcal{G})} \cdot \frac{\Pr(w_{n-1}.X \in \mathcal{G})}{\Pr(w_0.X\in\mathcal{G})} \prod_{\substack{1\leq k \leq n-1:\\ \alpha_k \in M}} P(\beta_k, \alpha_k) = \frac{\Pr(w_n.X \in \mathcal{G})}{\Pr(w_0.X\in\mathcal{G})} \prod_{\substack{1\leq k \leq n:\\ \alpha_k \in M}} P(\beta_k, \alpha_k).
		\end{align*}
		
		{\sc Case 2:} $\alpha_n \in M$. Then $x \not \in \mathcal{G}$ and $g_{\beta_n}(x) = \alpha_n$. The probability of this is
		$$\Pr(g_{\beta_n}(v_n.X) = \alpha_n\,\vert\, \beta_n v_n.X \in \mathcal{G}) = \frac{\Pr(g_{\beta_n}(v_n.X) = \alpha_n)}{\Pr(\beta_n v_n.X \in \mathcal{G})} = \frac{\Pr(g_{\beta_n}(v_n.X) = \alpha_n)}{\Pr(w_{n-1}.X \in \mathcal{G})}.$$
		Note that $w_n.X = \tau(w_n).X = \alpha_n \beta_n \tau(v_n).X$. In particular, if $\beta_n v_n.X \in \mathcal{G}$, then $w_n.X \in \mathcal{G}$. Thus
		\begin{align*}
			\Pr(g_{\beta_n}(v_n.X) = \alpha_n) &= \Pr(g_{\beta_n}(v_n.X) = \alpha_n \, \vert \, w_n.X \in \mathcal{G})\cdot \Pr(w_n.X \in \mathcal{G})\\
			&= \Pr(g_{\beta_n}(\tau(v_n).X) = \alpha_n \, \vert \, \alpha_n \beta_n \tau(v_n).X \in \mathcal{G})\cdot \Pr(w_n.X \in \mathcal{G})\\
			&\leq P(\beta_n, \alpha_n)\cdot \Pr(w_n.X \in \mathcal{G}). 
		\end{align*}
		Therefore, the probability that the first entries of the sequences produced by the algorithm are $(w_0, \ldots, w_n)$ and $(\alpha_1, \ldots, \alpha_n)$ is at most
		\begin{align*}
			\frac{P(\beta_n, \alpha_n)\cdot \Pr(w_n.X \in \mathcal{G})}{\Pr(w_{n-1}.X \in \mathcal{G})} \cdot \frac{\Pr(w_{n-1}.X \in \mathcal{G})}{\Pr(w_0.X\in\mathcal{G})} \prod_{\substack{1\leq k \leq n-1:\\ \alpha_k \in M}} P(\beta_k, \alpha_k) = \frac{\Pr(w_n.X\in\mathcal{G})}{\Pr(w_0.X\in\mathcal{G})} \prod_{\substack{1\leq k \leq n:\\ \alpha_k \in M}} P(\beta_k, \alpha_k).
		\end{align*}
	\end{proof}
	
	For an admissible pair $(w_0, \ldots, w_n; \alpha_1, \ldots, \alpha_n)$ denote
	$$P(w_0, \ldots, w_n; \alpha_1, \ldots, \alpha_n) \coloneqq \prod_{\substack{1\leq k \leq n:\\ \alpha_k \in M}} P(\beta_k, \alpha_k).$$
	
	\begin{defn}
		An \emph{extended admissible pair} is a sequence $(w_0, \ldots, w_{n-1}; \alpha_1, \ldots, \alpha_n)$, where $(w_0$, \ldots, $w_{n-1};$ $\alpha_1, \ldots, \alpha_{n-1})$ is an admissible pair and $\alpha_n \in M$. Denote the set of all extended admissible pairs by $E$.
	\end{defn}
	
	Now we have to discuss how to choose $\varepsilon$ on each step of the algorithm. Note that before defining $\varepsilon$ on the $k$'th step of the algorithm we have already constructed $(w_0, \ldots, w_{k-1})$ and $(\alpha_1, \ldots, \alpha_{k})$. Moreover, $(w_0, \ldots, w_{k-1}; \alpha_0, \ldots, \alpha_k)$ form an extended admissible pair. So we can think of $\varepsilon$ as a function $\varepsilon : E \to \mathbb{R}_+$.
	
	\begin{defn}
		Let $\varepsilon : E \to \mathbb{R}_+$. An \emph{$\varepsilon$-admissible} pair is an admissible pair $(w_0, \ldots, w_n; \alpha_1, \ldots, \alpha_n)$ such that $w_k = \tau^{-1}_{\varepsilon_k} (\alpha_k \beta_k) v_k$, where $\varepsilon_k = \varepsilon(w_0, \ldots, w_{k-1}; \alpha_1, \ldots, \alpha_k)$, for every index $k$ for which $\alpha_k \in M$. 
	\end{defn}
	
		Note that if the algorithm uses a function $\varepsilon : E \to \mathbb{R}_+$, then it can produce only $\varepsilon$-admissible sequences.
	
		The following definition employs the observation that the elements of $B^*$ are simply the finite sequences of elements of $B$. In particular, we can define the length $|w|$ of any element $w \in B^*$.
		
	\begin{defn}
		For $N \geq 0$, $w \in B^*\setminus\{1\}$ and $\varepsilon : E \to \mathbb{R}_+$ let $\mathcal{A}^\varepsilon_N(w)$ be the set of all $\varepsilon$-admissible pairs $(w_0, \ldots, w_n;\alpha_1, \ldots, \alpha_n)$ for which the following conditions hold:
		\begin{enumerate}
			\item $w_0 = w$;
			\item the number of indices $k$ such that $\alpha_k \in M$ is at most $N$;
			\item $|w_k| \geq |w_0|$ for all $1 \leq k \leq n$;
			\item either $n= 0$, or $\alpha_n \in M$.
		\end{enumerate}
	\end{defn}
	
	\begin{lemma}\label{lemma:meanbound}
		Suppose that $N \geq 0$ and $w \in B^*\setminus\{1\}$. Let $w = \beta v$, where $\beta \in B$ and $v \in B^*$. Then
		\begin{equation}\label{eq:meanbound}
			\inf_{\varepsilon : E \to \mathbb{R}_+} \sum_{\mathcal{A}^\varepsilon_N(w)} P(w_0, \ldots, w_n; \alpha_1, \ldots, \alpha_n)\, \leq\, \func(\beta).
		\end{equation}
	\end{lemma}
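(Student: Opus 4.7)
A direct induction on $N$ is insufficient: the inductive step will replace the single first letter $\beta$ of $w$ by the possibly much longer word $\tau^{-1}_{\varepsilon_1}(\alpha\beta)$, so a hypothesis only about length-one starting words does not close up. My plan is therefore to prove a strengthening by double induction. First, a suffix-stripping bijection $w_k \leftrightarrow \tilde w_k$ (where $w_k = \tilde w_k v$) identifies $\mathcal{A}^\varepsilon_N(\beta v)$ with the family $\mathcal{C}^{\tilde\varepsilon}_N(\beta)$ of admissible pairs starting at $\beta$ under the weakened condition $|w_k|\geq 1$, reducing the lemma to the case $v = 1$. I then aim to establish
$$\inf_\varepsilon \sum_{\mathcal{C}^\varepsilon_N(u)} P(w_0,\ldots,w_n;\alpha_1,\ldots,\alpha_n) \,\leq\, \func(u)\quad\text{for all } N \geq 0 \text{ and } u \in B^*\setminus\{1\},$$
of which the lemma is the case $|u| = 1$.

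\textbf{Induction.} The plan uses outer induction on $N$ and inner induction on $|u|$. For $N = 0$, conditions 2 and 4 force $n = 0$, so the sum equals $1$; since plugging any $\beta$ into (\ref{eq:LAL}) yields $\func(\beta) \geq 1$, also $\func(u) \geq 1$. For the inductive step, write $u = \beta u'$ with $\beta \in B$ and classify pairs in $\mathcal{C}^\varepsilon_N(u)$ by their first step. (i) If $n = 0$, the contribution is $1$. (ii) If $\alpha_1 = *$, then necessarily $|u'| \geq 1$ (else $|w_1| = 0$) and $n \geq 2$ (by condition 4), so the tail is a \emph{non-trivial} element of $\mathcal{C}^{\varepsilon'}_N(u')$; by the inner induction on $|u|$ its contribution is at most $\func(u') - 1$ (subtracting off the trivial $1$ that the bound for the full $\mathcal{C}^{\varepsilon'}_N(u')$ already absorbs). (iii) If $\alpha_1 = \alpha \in M$ with $\varepsilon_1 = \varepsilon(u;\alpha)$, the tail lies in $\mathcal{C}^{\varepsilon'}_{N-1}\bigl(\tau^{-1}_{\varepsilon_1}(\alpha\beta)u'\bigr)$, and by the outer induction on $N$ it is bounded by $\func(\tau^{-1}_{\varepsilon_1}(\alpha\beta)u') \leq (1+\varepsilon_1)\underline{\func}(\alpha\beta)\func(u')$, giving overall contribution $P(\beta,\alpha)(1+\varepsilon_1)\underline{\func}(\alpha\beta)\func(u')$. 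Summing these three contributions and applying (\ref{eq:LAL}) yields
$$\sum \,\leq\, 1 + (\func(u') - 1) + \func(u')\sum_{\alpha\in M} P(\beta,\alpha)(1+\varepsilon_1(\alpha))\underline{\func}(\alpha\beta) \,\leq\, \func(u) + \mathrm{error},$$
where $\mathrm{error} = \func(u')\sum_{\alpha} \varepsilon_1(\alpha) P(\beta,\alpha) \underline{\func}(\alpha\beta)$.

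\textbf{Main obstacle.} Since $M$ is at most countable, enumerating its elements and letting each $\varepsilon(u;\alpha_i)$ decay geometrically (e.g.\ $\varepsilon(u;\alpha_i) \leq \delta\cdot 2^{-i}/(P(\beta,\alpha_i)\underline{\func}(\alpha_i\beta)\func(u')+1)$) drives the error below any positive $\delta$, so $\inf_\varepsilon \leq \func(u)$. The chief technical point is that this single global $\varepsilon : E \to \mathbb{R}_+$ must simultaneously realize the inductive bounds in \emph{every} sub-case of (ii) and (iii). This is possible because the extended admissible pairs controlling the $*$-tail of case (ii) and the $M$-tails of case (iii) for distinct values of $\alpha$ lie in pairwise disjoint subsets of $E$; hence $\varepsilon$ can be specified independently on each sub-domain, and the separate sub-infima combine into a single joint infimum realizing the required bound.
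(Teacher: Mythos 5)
Your proof takes a genuinely different route from the paper's. The paper keeps the bound $\func(\beta)$ depending only on the \emph{first} letter, uses a single induction on $N$, and in the inductive step decomposes the portion of the sequence after the first $M$-step into $r+1$ pieces by tracking the first times $k_s$ at which the word length drops back to $|w_1|-s$; each piece lands in $\mathcal{B}_{N-1}(\beta_s\cdots\beta_r v)$ and is bounded by $\func(\beta_s)$, and the product of those bounds reconstructs $\underline{\func}(\alpha\beta)$. You instead strengthen the statement to a bound $\func(u)$ (depending multiplicatively on \emph{all} letters of $u$), allow the word to shrink all the way down to length $1$ rather than length $|w_0|$, and peel one letter at a time: stripping a $*$-step invokes the inner induction (same $N$, shorter $u$), and stripping an $M$-step invokes the outer induction (smaller $N$, arbitrarily long new word). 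Both organizations are sound; the paper's avoids the auxiliary strengthening and handles case~(ii) implicitly (a $*$-step can never occur at the start since $|w_1|\geq|w_0|$), while yours avoids the somewhat fiddly simultaneous decomposition into $r+1$ segments with a per-segment $\varepsilon[\,\cdot\,,\delta_{r+1}]$. Your error bookkeeping (geometric decay of $\varepsilon(u;\alpha_i)$, made possible by $M$ being countable, and disjointness of the relevant subsets of $E$ so the branch-wise infima can be realized jointly) parallels the paper's handling of the infimum not being attained.

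One technical point deserves care before this can be called complete: the suffix-stripping map $w_k\mapsto\tilde w_k$ (with $w_k=\tilde w_k v$) does not respect the condition $\Pr(w_k.X\in\mathcal{G})>0$ built into the paper's notion of admissible pair. Having $\Pr(\tilde w_k v.X\in\mathcal{G})>0$ does not imply $\Pr(\tilde w_k.X\in\mathcal{G})>0$, nor conversely, so the asserted identification of $\mathcal{A}^\varepsilon_N(\beta v)$ with a family of admissible pairs starting at $\beta$ is not a bijection as stated. The clean repair is to prove the strengthened bound for the larger family $\mathcal{C}'$ obtained by dropping the probability requirement (keeping only the structural constraints $w_0=u$, the count of $M$-indices, $|w_k|\geq 1$, and condition~4); then $\mathcal{A}^\varepsilon_N(\beta v)$ injects into $\mathcal{C}'^{\tilde\varepsilon}_N(\beta)$, the induction never uses the probability condition anyway, and the inequality $\sum_{\mathcal{A}^\varepsilon_N(\beta v)}P\leq\sum_{\mathcal{C}'^{\tilde\varepsilon}_N(\beta)}P\leq\func(\beta)$ still yields the lemma. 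You should also account, in the final error estimate, for the $\delta$-slack coming from the inner and outer inductive infima themselves not being attained, not only for the $(1+\varepsilon_1)$ factor; a geometrically decaying allocation handles it identically.
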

	\begin{proof}
		Let us consider another set of $\varepsilon$-admissible pairs. Namely, for $N \geq 0$, $w \in B^*\setminus\{1\}$ and $\varepsilon : E \to \mathbb{R}_+$ let $\mathcal{B}^\varepsilon_N(w)$ be the set of all $\varepsilon$-admissible pairs $(w_0, \ldots, w_n;\alpha_1, \ldots, \alpha_n)$ for which the following conditions hold:
	\begin{enumerate}
		\item $w_0 = w$;
		\item the number of indices $k$ such that $\alpha_k \in M$ is at most $N$;
		\item $|w_k| \geq |w_0|$ for all $1 \leq k \leq n$;
		\item $|w_n| = |w_0|$.
	\end{enumerate}
	
	There exists a bijection $f : \mathcal{B}^\varepsilon_N(w) \to \mathcal{A}^\varepsilon_N(w)$. Indeed, if $(w_0, \ldots, w_n; \alpha_1, \ldots, \alpha_n) \in \mathcal{B}^\varepsilon_N(w)$, then let $k$ be the greatest index for which $\alpha_k \in M$ (if $n=0$, then $k \coloneqq 0$), and let
	\begin{equation}\label{eq:bij}
		f(w_0, \ldots, w_n; \alpha_1, \ldots, \alpha_n) \coloneqq (w_0, \ldots, w_k; \alpha_1, \ldots, \alpha_k).
	\end{equation}
	It is easy to check that the map $f$ defined by (\ref{eq:bij}) is indeed a bijection between $\mathcal{B}^\varepsilon_N(w)$ and $\mathcal{A}^\varepsilon_N(w)$. Moreover, for any $(w_0, \ldots, w_n; \alpha_1, \ldots, \alpha_n) \in \mathcal{B}^\varepsilon_N(w)$ we have
	$$P(w_0, \ldots, w_n; \alpha_1, \ldots, \alpha_n) = P(f(w_0, \ldots, w_n; \alpha_1, \ldots, \alpha_n)).$$
	Therefore, it is enough to prove that
		\begin{equation}\label{eq:meanbound1}
			\inf_{\varepsilon : E \to \mathbb{R}_+} \sum_{\mathcal{B}^\varepsilon_N(w)} P(w_0, \ldots, w_n; \alpha_1, \ldots, \alpha_n)\, \leq\, \func(\beta).
		\end{equation}
	
		The proof is by induction on $N$. If $N = 0$, then for every $\varepsilon$ the set $\mathcal{B}_0^\varepsilon(w_0)$ contains only one pair, namely $(w_0; \emptyset)$. Hence the left-hand side of (\ref{eq:meanbound1}) in this case is equal to $1$, while $\func(\beta) \geq 1$ by (\ref{eq:LAL}).
		
		Now suppose that $N \geq 1$. For $w \in B^*\setminus\{1\}$ let $\mathcal{B}_N(w)$ be the set of all admissible pairs $(w_0, \ldots, w_n;\alpha_1, \ldots, \alpha_n)$ for which the following conditions hold:
	\begin{enumerate}
		\item $w_0 = w$;
		\item the number of indices $k$ such that $\alpha_k \in M$ is at most $N$;
		\item $|w_k| \geq |w_0|$ for all $1 \leq k \leq n$;
		\item $|w_n| = |w_0|$.
	\end{enumerate}
	In particular, $\mathcal{B}^\varepsilon_N(w)$ is the subset of $\mathcal{B}_N(w)$ consisting of all $\varepsilon$-admissible pairs.
	
	Consider any pair $(w_0, \ldots, w_n;\alpha_1, \ldots, \alpha_n) \in \mathcal{B}_N(w)$ with $n \geq 1$. Let $r \coloneqq |w_1| - |w_0|$. For $0 \leq s \leq r$ let $k_s$ be the least positive index such that $|w_{k_s}| = |w_1| - s$, and let $k_{r+1}\coloneqq n+1$. Clearly,
	$$1 = k_0 < k_1 < \ldots < k_r < k_{r + 1}=n+1.$$
	Suppose that $w_0 = \beta v$ and $w_1 = \beta_0 \beta_1 \cdots \beta_r v$, where $\beta$, $\beta_0$, \ldots, $\beta_r \in B$ and $v \in B^*$. Then for every $0 \leq s \leq r$
	\begin{equation*}
		w_{k_s} = \beta_s \cdots \beta_r v.
	\end{equation*}
	Moreover, for every $0 \leq s \leq r$ the pair $(w_{k_s}, \ldots, w_{k_{s+1} - 1}; \alpha_{k_s + 1}, \ldots, \alpha_{k_{s+1}-1})$ is admissible. Furthermore, the number of indices $k$ such that $\alpha_k \in M$ for this pair is strictly less than $N$, and $|w_{k_{s+1} - 1}| = |w_{k_s}|$. Thus
	\begin{equation}\label{eq:letters}
		(w_{k_s}, \ldots, w_{k_{s+1} - 1}; \alpha_{k_s + 1}, \ldots, \alpha_{k_{s+1}-1}) \in \mathcal{B}_{N-1}(w_{k_s}) = \mathcal{B}_{N-1}(\beta_s \cdots \beta_r v).
	\end{equation}
	
	By the induction hypothesis, for every $\beta v = w \in B^*$ and every $\delta > 0$ there exists $\varepsilon [ w, \delta] : E \to \mathbb{R}_+$ such that
	\begin{equation}\label{eq:ind}
		\sum_{\mathcal{B}^{\varepsilon[ w, \delta]}_{N-1}(w)} P(w_0, \ldots, w_n; \alpha_1, \ldots, \alpha_n)\, \leq\, (1+\delta)\func(\beta).
	\end{equation}
	
	Pick any $\delta > 0$. For an integer $m \geq 1$ choose $\delta_m > 0$ in such a way that $(1+\delta_m)^m \leq 1+\delta$. Define $\varepsilon : E \to \mathbb{R}_+$ in the following way.
	\begin{enumerate}
		\item For any $(w; \alpha) \in E$ let $\varepsilon(w; \alpha) \coloneqq \delta$.
		\item Suppose that $(w_0, \ldots, w_n; \alpha_1, \ldots, \alpha_{n+1}) \in E$, $n>0$, and $|w_k| \geq |w_0|$ for all $1 \leq k \leq n$. Let $r \coloneqq |w_1| - |w_0|$. For $0 \leq s \leq r$ let $k_s$ be the least positive index such that $|w_{k_s}| = |w_1| - s$. Note that the indices $k_s$ may be defined only for some first values of $s$ (because there may be no such positive index $k$ that $|w_k| = |w_1| -s$). Let $S$ be the greatest number among $0$, \ldots, $r$ for which $k_S$ is defined. Then let
		$$
			\varepsilon(w_0, \ldots, w_n; \alpha_1, \ldots, \alpha_{n+1}) \coloneqq \varepsilon[ w_{k_S}, \delta_{r+1}](w_{k_S}, \ldots, w_n; \alpha_{k_S + 1}, \ldots, \alpha_n).
		$$  
	\end{enumerate}
	For our purposes it is irrelevant how we define $\varepsilon$ on all other elements of $E$.
	
	Now, finally, consider a pair $(w_0, \ldots, w_n;\alpha_1, \ldots, \alpha_n) \in \mathcal{B}^\varepsilon_N(w)$ with $n > 0$. Recall that $r = |w_1| - |w_0| = |\tau^{-1}_\delta(\alpha_1 \beta)|-1$, $k_s$ is the least positive index such that $|w_{k_s}| = |w_1| - s$ for any $0\leq s\leq r$, and $k_{r+1} = n+1$. Also recall that $w_0 = w = \beta v$ and $w_1 = \tau^{-1}_\delta(\alpha_1 \beta) v = \beta_0 \beta_1 \cdots \beta_r v$, where $\beta$, $\beta_0$, \ldots, $\beta_r \in B$ and $v \in B^*$. Then
		\begin{equation}\label{eq:decomp}
			P(w_0, \ldots, w_n;\alpha_1, \ldots, \alpha_n) = P(\beta, \alpha_1) \prod_{s=0}^r P(w_{k_s}, \ldots, w_{k_{s+1} - 1}; \alpha_{k_s + 1}, \ldots, \alpha_{k_{s+1}-1}).
		\end{equation}
	Note that by (\ref{eq:letters}) and the construction of $\varepsilon$ we have
		\begin{equation}\label{eq:belongs}
			(w_{k_s}, \ldots, w_{k_{s+1} - 1}; \alpha_{k_s + 1}, \ldots, \alpha_{k_{s+1}-1}) \in \mathcal{B}_{N-1}^{\varepsilon[\beta_s \cdots \beta_r v,\, \delta_{r+1}]}(\beta_s \cdots \beta_r v).
		\end{equation}
		Observe that the set on the right-hand side of (\ref{eq:belongs}) is completely determined by $w$ and $\alpha_1$.
		Therefore, combining (\ref{eq:decomp}) and (\ref{eq:ind}), and using the notation $r(\alpha) \coloneqq |\tau^{-1}_\delta(\alpha \beta)|-1$ and $\beta^\alpha_0 \beta^\alpha_1 \cdots \beta^\alpha_{r(\alpha)} = \tau^{-1}_\delta(\alpha \beta)$, we~get
		\begin{align*}
			\sum_{\mathcal{B}^\varepsilon_N(w)} P(w_0, \ldots, w_n; \alpha_1, \ldots, \alpha_n) 
				\,\leq&\, 1 + \sum_{\alpha \in M} P(\beta, \alpha) \prod_{s = 0}^{r(\alpha)} (1+\delta_{r(\alpha)+1})\func(\beta_s^\alpha) \\
				=&\, 1 + \sum_{\alpha \in M} P(\beta, \alpha) (1+\delta_{r(\alpha)+1})^{r(\alpha)+1}\prod_{s = 0}^{r(\alpha)} \func(\beta_s^\alpha) \\
				\leq\, 1 + (1+\delta)^2 \sum_{\alpha \in M} P(\beta, \alpha) \underline{\func}(\alpha \beta)\, \xrightarrow[\delta \to 0]{}&\,
				1 + \sum_{\alpha \in M} P(\beta, \alpha) \underline{\func}(\alpha \beta) \,\leq\, \func(\beta),
		\end{align*}
		where the last inequality follows by (\ref{eq:LAL}).
	\end{proof}
	
	Now Theorem \ref{theo:LAL} follows almost immediately. Indeed, let $w_0$ be an element of $B^*$ such that $\Pr(w_0.X \in \mathcal{G}) > 0$ with the minimal possible length. Note that $w_0 \neq 1$, so let $w_0 = \beta v$, where $\beta \in B$ and $v \in B^*$. Choose some $\varepsilon : E \to \mathbb{R}_+$ and start the algorithm with these $w_0$ and $\varepsilon$. Suppose that it produced sequences $(w_0, w_1, \ldots)$ and $(\alpha_1, \alpha_2, \ldots)$. Then by the choice of $w_0$ we have $|w_k| \geq |w_0|$ for all $k$. In particular, if $(\alpha_{k_1}, \alpha_{k_2}, \ldots)$ is the subsequence of $(\alpha_1, \alpha_2, \ldots)$ that consists of all entries $\alpha_k \in M$, then
	$$(w_0 , \ldots, w_{k_N}; \alpha_1, \ldots, \alpha_{k_N}) \in \mathcal{A}^\varepsilon_N(w_0).$$
	Note that the sequence $(\alpha_1, \alpha_2, \ldots)$ must contain infinitely many elements from $M$, because if $\alpha_k = *$, then $|w_k| = |w_{k-1}| - 1$. Hence the expected number of sequences from $\mathcal{A}^\varepsilon_N(w_0)$ that appear as sequences of first entries of $(\alpha_1, \alpha_2, \ldots)$ is exactly $N+1$. On the other hand, due to Lemmata \ref{lemma:prbound} and \ref{lemma:meanbound}, the infimum of their expected number over all $\varepsilon : E \to \mathbb{R}_+$ is at most
	$$
		\frac{1}{\Pr(w_0.X\in\mathcal{G})}\left( \inf_{\varepsilon : E \to \mathbb{R}_+} \sum_{\mathcal{A}^\varepsilon_N(w)} P(w_0, \ldots, w_n; \alpha_1, \ldots, \alpha_n)\right)\, \leq\, \frac{\func(\beta)}{\Pr(w_0.X\in\mathcal{G})},
	$$
	i.e. it is bounded by some constant that is independent of $N$. This contradiction completes the proof of Theorem \ref{theo:LAL}.
			
	\section{Applications}\label{sec:applications}
	
	\subsection{Proper vertex colorings}
	
	Let us start with proving the following ``toy'' theorem.
	
	\begin{theo}\label{theo:propcol}
		Every graph\footnote{All graphs considered here are finite, undirected, and simple.} $G(V,E)$ admits a proper vertex coloring by $\Delta(G)+1$ colors. 
	\end{theo}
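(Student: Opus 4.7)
The plan is to apply the Local Action Lemma with the uniform distribution on colorings. Take $\mathcal{X} = \{1,\ldots,\Delta(G)+1\}^V$, let $X$ be uniformly distributed on $\mathcal{X}$, and let $\mathcal{G} \subseteq \mathcal{X}$ be the set of proper colorings. The conclusion $\Pr(X \in \mathcal{G}) > 0$ of the LAL is then exactly the assertion that a proper $(\Delta(G)+1)$-coloring exists, so it suffices to exhibit a monoid $M$, a generating set $B$, and a weight function $\func : B \to \mathbb{R}_+$ verifying hypothesis (\ref{eq:LAL}).

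The generators should mirror the Moser--Tardos resampling step. I would fix a total order on $V$, take $B$ to be the edge set $E$, and let the generator $\beta_{uv}$ (with $u < v$) act on a coloring $c$ by the deterministic local modification $(\beta_{uv}.c)(v) = c(v) + 1 \pmod{\Delta(G)+1}$ and $(\beta_{uv}.c)(w) = c(w)$ for $w \neq v$. Let $M$ be the monoid generated by these $\beta_e$ (an abelian monoid whose elements record how the color of each vertex has been shifted). For a non-proper coloring $x$ and a generator $\beta$ such that $\beta.x \in \mathcal{G}$, I would define $g_\beta(x)$ so that $\alpha\beta.x$ is proper, with $\alpha = g_\beta(x)$ recording the additional shifts needed to repair the remaining conflicts; this is the natural translation into the LAL framework of a Moser--Tardos witness tree rooted at the first bad edge of $x$.

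Verifying the central inequality (\ref{eq:LAL}) is the heart of the proof. I would try a constant weight $\func \equiv t$ for some $t > 1$, so that (\ref{eq:LAL}) reduces to an inequality of the form $t \geq 1 + \sum_{\alpha \in M} P(\beta, \alpha)\, t^{|\alpha|+1}$, where $|\alpha|$ denotes the minimal length of a word in $B^*$ representing $\alpha$. The key combinatorial input is that among the $\Delta(G)+1$ available colors every vertex has at most $\Delta(G)$ neighbors, so at each shifted vertex at least one color is free; this should give geometric decay of $P(\beta, \alpha)$ in $|\alpha|$, turning the right-hand side into a convergent geometric series that is easily controlled for $t$ just above $1$.

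The main obstacle will be defining the $g_\beta$ maps carefully enough that the probabilities $P(\beta, \alpha)$ factor along (essentially) independent branches of an underlying repair tree; a careless definition would let these probabilities couple across overlapping edges and inflate the sum in (\ref{eq:LAL}). Once the bookkeeping is clean and $t$ is chosen appropriately, the LAL delivers $\Pr(X \in \mathcal{G}) > 0$, which is exactly the existence of a proper $(\Delta(G)+1)$-coloring of $G$.
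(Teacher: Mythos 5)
Your approach does not fit the hypotheses of the LAL, for two independent and fatal reasons, both stemming from the choice of action.

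First, the LAL requires that $\mathcal{G}$ be invariant under the monoid action: for every $x \in \mathcal{G}$ and $\alpha \in M$ we must have $\alpha.x \in \mathcal{G}$. Your generators cyclically shift the color of a single vertex, and such a shift can easily turn a proper coloring into an improper one. So $\mathcal{G}$ (the set of proper total colorings) is not closed under your action, and Theorem~\ref{theo:LAL} simply does not apply. Second, even setting that aside, your generators act as bijections on $\mathcal{X} = [\Delta+1]^V$, and they are measure-preserving with respect to the uniform distribution of $X$. Hence $\Pr(\alpha.X \in \mathcal{G}) = \Pr(X \in \mathcal{G})$ for every $\alpha \in M$, so the LAL's hypothesis ``$\Pr(\alpha.X \in \mathcal{G}) > 0$ for some $\alpha$'' is literally the conclusion you are trying to prove. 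The argument is circular.

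The paper sidesteps both problems by working with \emph{partial} colorings: $\mathcal{X} = \mathsf{PF}(V, [\Delta+1])$, and $M = \mathcal{P}(V)$ acts by deleting a set of vertices from the domain, $S.f = f|_{\dom(f)\setminus S}$. This is not a bijective action, so there is no circularity: the constant element $V$ collapses every coloring to the empty partial coloring, which is vacuously proper, giving $\Pr(V.X \in \mathcal{G}) = 1$ for free. Restriction also preserves properness, so $\mathcal{G}$ is genuinely $M$-invariant. The map $g_v$ can then be taken to be constantly $\emptyset$: the algorithm, read probabilistically, corresponds to ``uncolor $v$, then resample it'', and the only estimate needed is that a fresh random color for $v$ conflicts with its (at most $\Delta$) colored neighbors with probability at most $\Delta/(\Delta+1)$. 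Setting $\func \equiv \Delta+1$ then satisfies inequality~(\ref{eq:LAL}). If you want to use the LAL, the ``forgetful'' action on partial structures is the right pattern, not an invertible resampling action on total structures.
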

	
	We are using this very easy example to show how combinatorial statements can be transformed into instances for the LAL. Note that the LLL applied to this problem gives only a bound of about $e\Delta$ instead of $\Delta + 1$.
	
	\begin{proof}
		Many combinatorial problems (we will discuss several of them later) may be turned into special cases of the LAL in the following manner. If $A$ and $B$ are sets, then a \emph{partial function} from $A$ to $B$ (notation: $f : A \dashrightarrow B$) is a map $f : A' \to B$ for some $A' \subseteq A$. Denote the set of all partial finctions from $A$ to $B$ by $\mathsf{PF}(A, B)$. For $f \in \mathsf{PF}(A,B)$ let $\dom(f) \subseteq A$ be the domain of $f$. If $\dom(f) = A$, then to emphasize that fact we would sometimes call $f$ a \emph{total function} from $A$ to $B$. If $A$ is finite and $B$ is at most countable, then $\mathsf{PF}(A, B)$ is at most countable, so we may equip it with the $\sigma$-algebra $\Sigma_{A, B} \coloneqq \mathcal{P}(\mathsf{PF}(A, B))$ of all its subsets. The power set $\mathcal{P}(A)$ can be turned into a commutative monoid with the multiplication given by the union operation that acts on $\mathsf{PF}(A, B)$ with
		$$S.f = f|_{\dom(f) \setminus S}.$$
		Clearly, this action is measurable with respect to $\Sigma_{A, B}$. Observe that if $A$ is finite, then as a monoid $\mathcal{P}(A)$ is generated by the set $\{\{a\}\,:\,a\in A\}$ of singletons. Slightly abusing the notation, we would indentify the singletons with the corresponding elements of $A$. In particular, we would say that $\mathcal{P}(A)$ is generated by $A$, and write $a.f$ instead of $\{a\}.f$.
		
		Consider a graph $G(V, E)$ with maximum degree $\Delta$. Then $\mathsf{PF}(V, [\Delta+1])$ is the set of all partial colorings of $G$. Let $\mathcal{G} \subseteq \mathsf{PF}(V, [\Delta+1])$ be the set of all proper partial colorings. Note that for every $S \in \mathcal{P}(V)$ and $f \in \mathcal{G}$ we have $S.f \in \mathcal{G}$ (any restriction of a proper coloring is proper). Now choose a total coloring of $G$ uniformly at random. That gives us a random variable $X \in \mathsf{PF}(V, [\Delta+1])$. We want to prove that $\Pr(X \in \mathcal{G}) > 0$. Note that $V.X = \emptyset \in \mathcal{G}$, in particular, $\Pr(V.X \in \mathcal{G}) = 1 > 0$.
		
		Suppose that $f \not\in \mathcal{G}$, but $v.f \in \mathcal{G}$ for some $v \in V$. Let $g_v(f) \coloneqq \emptyset$. Then for $v \in V$ and $S \in \mathcal{P}(V)$ we have $P(v, S) \neq 0$ only if $S = \emptyset$. In the latter case
		$$P(v, \emptyset) = \sup_{S \subseteq V}
			\Pr\left(X|_{V\setminus S} \not \in \mathcal{G}\,\middle\vert\, X|_{V\setminus(S\cup\{v\})} \in \mathcal{G}\right).$$
		In other words, we have to give an upper bound for the probability that a random coloring of $V \setminus S$ is improper given that its restriction to $V \setminus (S \cup\{v\})$ is proper. Since there can be no more than $\Delta$ forbidden colors for $v$ in the restricted coloring, this probability is at most $\Delta/(\Delta+1)$. So it remains to find a function $\func : V \to \mathbb{R}_+$ such that for every $v \in V$ we have
		$$\func(v) \geq 1 + \frac{\Delta}{\Delta + 1} \func(v).$$
		Setting $\func(v) = \Delta + 1$ for all $v \in V$ completes the proof.
	\end{proof}
	
	\subsection{The LAL implies the Lopsided LLL}\label{subsec:LLL}
	
	Now we are going to show the connection between the LAL and the LLL. Namely, we will prove that the Lopsided LLL, which is a strengthening of the LLL, may be derived as a particular case of the LAL.
	
	\begin{theo}[Lopsided Lov\'{a}sz Local Lemma, \cite{Erdos}]\label{theo:LopLLL}
		Let $\beauty{A}$ be a finite set of random events in a probability space $\Omega$. For $A\in\beauty{A}$ let $\Gamma(A)$ be a subset of $\beauty{A}\setminus\{A\}$ such that
		$$\Pr(A) \geq \Pr\left(A\middle\vert \bigcap_{B \in \beauty{S}} \overline{B} \right)$$
		for every $\beauty{S} \subseteq \beauty{A}\setminus(\Gamma(A)\cup\{A\})$. Suppose that there exists an assignment of reals $\mu:\beauty{A}\to[0;1)$ such that for every $A\in\beauty{A}$ we have
		\begin{equation}\label{eq:LopLLL}
			\Pr(A)\leq \mu(A) \prod_{B\in \Gamma(A)}(1-\mu(B)).
		\end{equation}
		Then $\Pr\left(\bigcap_{A\in\beauty{A}}\overline{A}\right) > 0$.
	\end{theo}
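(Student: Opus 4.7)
The plan is to adapt the subset-removal framework from the proof of Theorem \ref{theo:propcol}, with subsets of $\beauty{A}$ in place of partial colorings. Take $\mathcal{X} \coloneqq \mathcal{P}(\beauty{A})$ with the discrete $\sigma$-algebra, set $\mathcal{G} \coloneqq \{\emptyset\}$, and let $M \coloneqq \mathcal{P}(\beauty{A})$ (with multiplication given by union) act on $\mathcal{X}$ via $S.T \coloneqq T \setminus S$; identify the generating set $B$ with $\beauty{A}$ through singletons. Define $X : \Omega \to \mathcal{X}$ by $\omega \mapsto \{A \in \beauty{A} : \omega \in A\}$, so that $\{X \in \mathcal{G}\}$ coincides with the target event $\bigcap_{A \in \beauty{A}} \overline{A}$. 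The remaining standing hypotheses of Theorem \ref{theo:LAL} (measurability, $\alpha.\mathcal{G} \subseteq \mathcal{G}$, at most countability of $M$, and $\Pr(\alpha.X \in \mathcal{G}) > 0$ for $\alpha \coloneqq \beauty{A}$, since $\beauty{A}.X = \emptyset$ deterministically) are all immediate.

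For each $\beta \in B$ I would set $g_\beta(f) \coloneqq \Gamma(\beta)$. Because $\beta.f \in \mathcal{G}$ with $f \notin \mathcal{G}$ forces $f = \{\beta\}$, the map $g_\beta$ only ever takes the value $\Gamma(\beta)$, and hence $P(\beta, \alpha) = 0$ for $\alpha \neq \Gamma(\beta)$, so the sum in (\ref{eq:LAL}) collapses to a single term. The main computation is bounding $P(\beta, \Gamma(\beta))$: for any $\gamma \in M$, the conditioning event $\Gamma(\beta)\beta\gamma.X \in \mathcal{G}$ is precisely $\bigcap_{B \in \beauty{S}} \overline{B}$ where $\beauty{S} \coloneqq \beauty{A} \setminus (\Gamma(\beta) \cup \{\beta\} \cup \gamma) \subseteq \beauty{A} \setminus (\Gamma(\beta) \cup \{\beta\})$, while the event $g_\beta(\gamma.X) = \Gamma(\beta)$ forces the random event $\beta$ to occur. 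Applying the lopsidependency hypothesis to this particular $\beauty{S}$ then yields $P(\beta, \Gamma(\beta)) \leq \Pr(\beta) \leq \mu(\beta) \prod_{B \in \Gamma(\beta)} (1 - \mu(B))$.

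Finally, I would set $\func(\beta) \coloneqq 1/(1 - \mu(\beta))$. Since $\beta \notin \Gamma(\beta)$, the monoid product $\Gamma(\beta) \cdot \beta$ equals $\Gamma(\beta) \cup \{\beta\}$ as a set, and the no-repetition decomposition gives $\underline{\func}(\Gamma(\beta) \cup \{\beta\}) \leq \prod_{B \in \Gamma(\beta) \cup \{\beta\}} 1/(1 - \mu(B))$. Plugging these bounds into (\ref{eq:LAL}), the inequality to verify reduces to
\[
\frac{1}{1 - \mu(\beta)} \;\geq\; 1 + \mu(\beta) \prod_{B \in \Gamma(\beta)}(1 - \mu(B)) \cdot \prod_{B \in \Gamma(\beta) \cup \{\beta\}} \frac{1}{1 - \mu(B)} \;=\; 1 + \frac{\mu(\beta)}{1 - \mu(\beta)},
\]
which is an identity. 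Theorem \ref{theo:LAL} then delivers $\Pr(X \in \mathcal{G}) > 0$, i.e.\ $\Pr\bigl(\bigcap_{A \in \beauty{A}} \overline{A}\bigr) > 0$.

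The step that will require the most care is the translation of the conditioning event $\{\Gamma(\beta)\beta\gamma.X \in \mathcal{G}\}$ into an event of the form $\bigcap_{B \in \beauty{S}} \overline{B}$ with $\beauty{S}$ \emph{guaranteed} to be disjoint from $\Gamma(\beta) \cup \{\beta\}$, since only this disjointness legalizes a direct invocation of the lopsidependency hypothesis; everything else is the same algebraic manipulation that already underlies the classical LLL proof.
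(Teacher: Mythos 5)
Your proposal is correct and takes essentially the same approach as the paper's own proof: the paper works with $\mathsf{PF}(\beauty{A},\{0,1\})$ and lets $\mathcal{G}$ be the partial functions never taking the value $1$, which is just your $\mathcal{P}(\beauty{A})$-with-$\mathcal{G}=\{\emptyset\}$ formulation carrying around the (irrelevant) domain information. The key choices ($g_A(f)\coloneqq\Gamma(A)$, the bound $P(A,\Gamma(A))\leq\Pr(A)$ via lopsidependency, and $\func(A)\coloneqq 1/(1-\mu(A))$) coincide exactly.
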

	\begin{proof}
		Consider the set $\mathsf{PF}(\beauty{A}, \{0, 1\})$. Let $\mathcal{G} \subseteq \mathsf{PF}(\beauty{A}, \{0, 1\})$ be the set of partial functions that do not take the value $1$. For $\omega \in \Omega$ let $X(\omega)$ be the total function from $\beauty{A}$ to $\{0, 1\}$ that is equal to $1$ if and only if $\omega \in A$. Our goal then is to show that $X \in \mathcal{G}$ with positive probability.
		
		Note that $\beauty{A}.X = \emptyset \in \mathcal{G}$ with probability $1$. If $f \not\in \mathcal{G}$, but $A.f \in \mathcal{G}$ for some $A \in \beauty{A}$, then let $g_A(f) \coloneqq \Gamma(A)$. Then $P(A, \beauty{S}) \neq 0$ only if $\beauty{S} = \Gamma(A)$. In the latter case
		$$P(A, \Gamma(A)) = \sup_{\beauty{S} \subseteq \beauty{A}} \Pr\left( X|_{\beauty{A}\setminus\beauty{S}} \not\in \mathcal{G} \text{ and } X|_{\beauty{A}\setminus(\beauty{S}\cup\{A\})}\in\mathcal{G}\;\middle\vert\; X|_{\beauty{A}\setminus(\beauty{S} \cup \Gamma(A)\cup\{A\})} \in\mathcal{G} \right).$$
		Clearly, if $A \in \beauty{S}$, then
		$$\Pr\left( X|_{\beauty{A}\setminus\beauty{S}} \not\in \mathcal{G} \text{ and } X|_{\beauty{A}\setminus(\beauty{S}\cup\{A\})}\in\mathcal{G}\;\middle\vert\; X|_{\beauty{A}\setminus(\beauty{S} \cup \Gamma(A)\cup\{A\})} \in\mathcal{G} \right) = 0.$$
		If, on the other hand, $A \not \in \beauty{S}$, then
		$$\Pr\left( X|_{\beauty{A}\setminus\beauty{S}} \not\in \mathcal{G} \text{ and } X|_{\beauty{A}\setminus(\beauty{S}\cup\{A\})}\in\mathcal{G}\;\middle\vert\; X|_{\beauty{A}\setminus(\beauty{S} \cup \Gamma(A)\cup\{A\})} \in\mathcal{G} \right) \leq \Pr\left(A\,\middle\vert\,\bigcap_{B \in \beauty{A}\setminus(\beauty{S} \cup \Gamma(A)\cup\{A\})} \overline{B}\right) \leq \Pr(A).$$
		
		Thus we need to find a function $\func : \beauty{A} \to \mathbb{R}_+$ satisfying
		$$\func(A) \geq 1 + \Pr(A) \prod_{B \in \Gamma(A)\cup\{A\}} \func(B)$$
		for all $A \in \beauty{A}$. Take $\func(A) \coloneqq 1/(1-\mu(A))$. Then
			\begin{align*}
			1 + \Pr(A) \prod_{B \in \Gamma(A)\cup\{A\}} \func(B) &= 1 + \frac{\Pr(A)}{\prod_{B \in \Gamma(A)\cup\{A\}} (1-\mu(B))} \\
			& \leq 1 + \frac{\mu(A) \prod_{B\in \Gamma(A)}(1-\mu(B))}{\prod_{B \in \Gamma(A)\cup\{A\}} (1-\mu(B))} \\
			& = 1 + \frac{\mu(A)}{1-\mu(A)} = \frac{1}{1-\mu(A)} = \func(A),
			\end{align*}	
		and we are done.
	\end{proof}
	
	\subsection{A lower bound on the probability}
	
	One of the main ideas of the entropy compression method is to avoid bounding the probability explicitly, but instead focus on the expected runtime of a certain randomized algorithm. So it is unclear that the entropy compression can possibly produce any explicit lower bound on the probability, except for just showing that it must be positive. Note that the classical LLL, on the other hand, does give a lower bound on the probability, namely it asserts that $\Pr\left(\bigcap_{A\in\beauty{A}}\overline{A}\right) \geq \prod_{A\in\beauty{A}} (1 - \mu(A))$. Although the proof of the LAL, following the philosophy of the entropy compression method, does not bound the probability explicitly, the statement itself, surprisingly enough, can be bootstrapped to give an explicit lower bound as a corollary (and this lower bound implies the standard lower bound of the LLL). To obtain this bound we exploit the fact that the LAL is stated in a very general and abstract way. That allows us to construct an auxiliary instance for the LAL, applying the lemma to which gives the desired result.
	
	\begin{theo}\label{theo:LB}
		In the setting of the LAL, for every $\alpha \in M$ we have
		$$
			\Pr(X \in \mathcal{G}) \geq \frac{\Pr(\alpha.X \in \mathcal{G})}{\underline{\func}(\alpha) }.
		$$
	\end{theo}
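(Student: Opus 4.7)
If $\Pr(\alpha.X \in \mathcal{G}) = 0$ the stated bound is vacuous, so assume $\Pr(\alpha.X \in \mathcal{G}) > 0$. My plan is to first \emph{reduce} the theorem to a word-level statement and then to establish that statement via an \emph{auxiliary LAL instance}, exactly as the authors hint.

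For the reduction, I would show that it suffices to prove the stronger pointwise inequality
\[
\Pr(w.X \in \mathcal{G}) \;\le\; \func(w)\,\Pr(X \in \mathcal{G}) \qquad \text{for every } w \in B^*,
\]
where $\func(w) \coloneqq \prod_{i=1}^{|w|} \func(\beta_i)$. Since $\Pr(w.X \in \mathcal{G})$ depends on $w$ only through $\tau(w)$, taking the infimum of the right-hand side over $w \in \tau^{-1}(\alpha)$ and invoking the definition \eqref{eq:underline} of $\underline{\func}(\alpha)$ immediately delivers $\Pr(\alpha.X \in \mathcal{G}) \le \underline{\func}(\alpha)\Pr(X \in \mathcal{G})$, which rearranges to the theorem.

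For the displayed inequality, I would construct, for each fixed $w \in B^*$, an auxiliary LAL instance that leaves $\mathcal{X}$, $\Sigma$, $M$, $B$, the action, $\mathcal{G}$, and each $g_\beta$ unchanged, and only modifies the probability space and random variable. A natural candidate is to adjoin an independent uniform $U \in [0,1]$ and a ``price'' homomorphism $c : M \to \mathbb{R}_+$ extending $\func$ on generators, then to define $X^*$ as a coupling of $X$ with $w.X$ governed by $U$, designed so that $\Pr(X^* \in \mathcal{G}^*) > 0$ is logically equivalent to $\func(w)\Pr(X \in \mathcal{G}) > \Pr(w.X \in \mathcal{G}) - \varepsilon$. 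The check that the LAL hypotheses carry over with the same $\func$ is the routine part: because $U$ is independent of everything the maps $g_\beta$ inspect, one has $P^*(\beta,\gamma) \le P(\beta,\gamma)$ after cancelling the common independent factor, and the inherited hypothesis $\Pr(\alpha.X^* \in \mathcal{G}^*) > 0$ is immediate from $\Pr(\alpha.X \in \mathcal{G}) > 0$. Applying the LAL to this instance then yields the inequality at any $\varepsilon > 0$, and letting $\varepsilon \to 0$ finishes the proof.

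The main obstacle, which is where the real work lives, is designing the coupling in the auxiliary instance so that all three demands hold simultaneously: (a) $\mathcal{G}^*$ remains $M$-invariant under the extended action on $\mathcal{X} \times [0,1]$, (b) $P^*(\beta,\gamma) \le P(\beta,\gamma)$ so that the same $\func$ remains a valid witness for \eqref{eq:LAL}, and (c) the qualitative conclusion ``$\Pr(X^* \in \mathcal{G}^*)>0$'' sharpens into the numerical bound $\func(w)\Pr(X \in \mathcal{G}) \ge \Pr(w.X \in \mathcal{G})$. The natural first attempts (mixing $X$ with $\alpha.X$, or adjoining a trivial Bernoulli absorbing state) easily satisfy (a) and (b) but collapse to the qualitative conclusion of the original LAL; the crux is to tune the price threshold $h(w) = 1/\func(w)$ in the auxiliary $\mathcal{G}^*$ so that the $M$-multiplicativity of $\func$ on generators (itself built into \eqref{eq:LAL}) precisely matches the way the action scales $U$, turning the LAL's ``positive probability'' statement into the required multiplicative inequality.
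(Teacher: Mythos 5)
You have correctly guessed the overall strategy---bootstrap the LAL by applying it to an auxiliary instance---but the concrete construction is missing, and the candidate you sketch is not the one the paper uses and has real obstacles. You acknowledge this yourself: the three demands (a) $M$-invariance of $\mathcal{G}^*$, (b) $P^* \le P$, (c) the positivity statement sharpening to a quantitative bound, are in tension, and ``the crux is to tune the price threshold'' is left unresolved. A continuous coupling $X^* = (X,U)$ with the $M$-action fixing the $U$-coordinate would only rescale $\Pr(X^* \in \mathcal{G}^*)$ by a constant and collapse back to the qualitative conclusion; making $U$ scale with the action in a way that both preserves the measurable monoid-action axioms and keeps $\mathcal{G}^*$ invariant is precisely the step you never supply.

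The paper's construction is different and considerably cleaner: it adjoins a \emph{disjoint copy} $\mathcal{X}'$ of $\mathcal{X}$ and a single new generator $\nu$ to $M$ (with the relations $\nu\alpha = \alpha$ for $\alpha \neq 1$ and $\nu^2 = \nu$), lets $\nu$ send each shadow point $x' \in \mathcal{X}'$ to its original $x \in \mathcal{X}$ while the old generators fix $\mathcal{X}'$ pointwise, and places the random variable $X'$ in $\mathcal{X}'$. Then $\Pr(X' \in \mathcal{G}) = 0$ by design, so by the contrapositive of the LAL, for \emph{every} choice of $\func(\nu)$ some inequality in (\ref{eq:LAL}) must fail. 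The $B$-inequalities are unchanged (nothing in $M'$ moves a point of $\mathcal{X}$ into $\mathcal{X}'$), so the $\nu$-inequality fails for all $\func(\nu) \in \mathbb{R}_+$; computing $P(\nu,\alpha) = \Pr(X \in \mathcal{G})/\Pr(\alpha.X \in \mathcal{G})$ and $\underline{\func}(\alpha\nu) = \underline{\func}(\alpha)\func(\nu)$ and asking when the linear inequality $\func(\nu) \ge 1 + P(\nu,\alpha)\underline{\func}(\alpha)\func(\nu)$ is unsolvable yields the theorem directly, with no $\varepsilon$-limit, no coupling, and no word-level reduction. Your reduction to $\Pr(w.X \in \mathcal{G}) \le \func(w)\Pr(X \in \mathcal{G})$ is correct as a target but unnecessary: the paper works with $\underline{\func}(\alpha)$ directly because the multiplicative structure you are trying to engineer with a price homomorphism is already encoded in the definition of $\underline{\func}$ once one new generator is available. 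In short, right strategy, genuinely incomplete and meaningfully different execution.
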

	\begin{proof}
		Construct an auxiliary instance for the LAL in the following manner. Let $\mathcal{X}'$ be a copy of $\mathcal{X}$ disjoint from $\mathcal{X}$, and for $x \in X$ denote its copy in $\mathcal{X}'$ by $x'$. Let $M'$ be obtained from $M$ by adding one new generator $\nu$ and declaring that $\nu\alpha = \alpha$ for every $\alpha \in M \setminus \{1\}$, and $\nu^2 = \nu$. So $M' = M \cup \{\alpha\nu\,:\,\alpha \in M\}$ as a set. Let $B' \coloneqq B \cup \{\nu\}$ be a generating set for $M'$. Let $M'$ act on $\mathcal{X} \cup \mathcal{X}'$ in the following way: If $\alpha \in M$, $x \in \mathcal{X}$, then let $\alpha.x$ be the same as in the action of $M$ on $\mathcal{X}$; if $\alpha \in M$ and $x' \in \mathcal{X}'$, then let $\alpha.x' = x'$; if  $x \in \mathcal{X}$, then let $\nu.x = x$; finally, if $x' \in \mathcal{X}'$, then let $\nu.x' = x$. In other words, we allow $M$ to act on $\mathcal{X}$ as before, whereas $\nu$ send the elements from $\mathcal{X}'$ back to their respective copies in $\mathcal{X}$.
		
		Let the set $\mathcal{G} \subseteq \mathcal{X}$ stay the same as before (so $\mathcal{G} \cap \mathcal{X}' = \emptyset$). To define the random variable $X' \in \mathcal{X}\cup \mathcal{X}'$, pick a random element from $\mathcal{X}'$, using the distribution of $X \in \mathcal{X}$ (so the notation is correct: $X'$ is the variable obtained from $X$ by sending it to $\mathcal{X}'$ via the map $' : \mathcal{X} \to \mathcal{X}'$).
		
		If $x \in \mathcal{X} \setminus \mathcal{G}$, but $\beta.x \in \mathcal{G}$ for some $\beta \in B'$, then $\beta \in B$, since $\nu.x = x$, so we can define $g_\beta(x)$ to be the same as before. If, on the other hand, $x' \in \mathcal{X}'$ and $\beta.x' \in \mathcal{G}$, then $\beta = \nu$, since otherwise $\beta.x' = x'$. In this case let $g_\nu(x') \coloneqq \alpha$ for some fixed $\alpha \in M$.
		
		Clearly, since $M'$ never sends any elements from $\mathcal{X}$ to $\mathcal{X}'$, the inequalities of the LAL corresponding to $\beta \in B$ are exactly the same as before, and by assumption they are satisfied for $\func: B \to \mathbb{R}_+$. But, by definition, $X' \in \mathcal{X}'$, so $\Pr(X' \in \mathcal{G}) = 0$. Thus the inequality corresponding to $\nu$ must be violated. This inequality is
		$$
			\func(\nu) \geq 1 + P(\nu, \alpha) \underline{\func}(\alpha\nu).
		$$
		Note that $\underline{\func}(\alpha\nu) = \underline{\func}(\alpha) \func(\nu)$, so we can rewrite the last inequality as
		$$
			\func(\nu) \geq 1 + P(\nu, \alpha)\underline{\func}(\alpha) \func(\nu).
		$$
		By definition,
		$$
			P(\nu, \alpha) = \sup_{\gamma \in M'} \Pr\left( g_\nu(\gamma.X') = \alpha \,\middle\vert\, \alpha\nu\gamma.X' \in \mathcal{G}\right).
		$$
		If $\gamma \in M' \setminus M$, then $\gamma.X' \in \mathcal{X}$, and hence $g_\nu(\gamma.X')$ is undefined. But if $\gamma \in M$, then $\gamma.X' = X'$, so
		$$
			P(\nu, \alpha) = \Pr\left(g_\nu(X') = \alpha\,\middle\vert\, \alpha\nu.X' \in \mathcal{G}\right).
		$$
		Since $\nu.X' = X$, and $g_\nu(X') = \alpha$ if and only if $\nu.X' \in \mathcal{G}$, we get
		$$
			P(\nu, \alpha) = \Pr\left(X \in \mathcal{G}\,\middle\vert\,\alpha.X \in \mathcal{G}\right) = \frac{\Pr(X \in \mathcal{G})}{\Pr(\alpha.X \in \mathcal{G})}.
		$$
		Thus the following inequality does not hold for any $\func(\nu) \in \mathbb{R}_+$:
		$$
			\func(\nu) \geq 1 + \frac{\Pr(X \in \mathcal{G})}{\Pr(\alpha.X \in \mathcal{G})}\underline{\func}(\alpha) \func(\nu).
		$$
		But this inequality does not have a solution $\func(\nu) \in \mathbb{R}_+$ if and only if
		$$
			\frac{\Pr(X \in \mathcal{G})}{\Pr(\alpha.X \in \mathcal{G})}\underline{\func}(\alpha) \geq 1,
		$$
		and we are done.
	\end{proof}
	
	In the framework discussed in Section \ref{subsec:LLL}, we have
	$$
		\Pr(\beauty{A}.X \in \mathcal{G}) = 1,
	$$
	and
	$$
		\underline{\func}(\beauty{A}) = \prod_{A \in \beauty{A}} \func(A) = \frac{1}{\prod_{A \in \beauty{A}}(1 - \mu(A))},
	$$
	so applying Theorem \ref{theo:LB} gives
	$$
		\Pr\left(\bigcap_{A\in\beauty{A}}\overline{A}\right) = \Pr(X \in \mathcal{G}) \geq \frac{\Pr(\alpha.X \in \mathcal{G})}{\underline{\func}(\alpha) } = \prod_{A \in \beauty{A}}(1 - \mu(A)),
	$$
	as desired.
	
	\subsection{Non-repetitive sequences and non-repetitive colorings}
	
		The study of non-repetitive sequences and non-repetitive colorings is one of the first areas of combinatorics where the entropy compression method was successfully applied. Here we are going to present two of these results and reprove them using the LAL.
	
		A fnite sequence $a_1 a_2 \ldots a_n$ is \emph{non-repetitive}, if there are no $1 \leq t \leq \left\lfloor n/2\right\rfloor$ and $1\leq s \leq n-2t+1$ such that $a_k = a_{k+t}$ for all $s\leq k \leq s+t-1$. A well known result by Thue \cite{Thue} asserts that there exist arbitrarily long non-repetitive sequences of elements from $\{0, 1, 2\}$. The following theorem is a choosability version of this result.
		
		\begin{theo}[Grytczuk \emph{et al.} \cite{Grytczuk}]\label{theo:nonrepseq}
			Let $L_1$, $L_2$, \ldots, $L_n$ be a sequence of sets with $|L_k| \geq 4$ for all $1\leq k \leq n$. Then there exists a non-repetitive sequence $a_1 a_2 \ldots a_n$ such that $a_k \in L_k$ for all $1 \leq k \leq n$.
		\end{theo}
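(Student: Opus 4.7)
The plan is to translate Grytczuk \emph{et al.}'s entropy-compression proof into the LAL framework, in the same spirit as Theorem~\ref{theo:propcol}. Let $\mathcal{X} = \bigsqcup_{m=0}^n L_1 \times \cdots \times L_m$ be the set of all admissible sequences of length at most $n$, let $\mathcal{G} \subseteq \mathcal{X}$ consist of the non-repetitive such sequences, and let $X = (X_1, \ldots, X_n)$ be drawn with each $X_k$ uniform and independent in $L_k$. Take $M = \mathbb{N}$ with addition, generating set $B = \{1\}$, and the action $t.(a_1, \ldots, a_m) = (a_1, \ldots, a_{\max(m-t,\,0)})$. Every prefix of a non-repetitive sequence is non-repetitive, so this action preserves $\mathcal{G}$, and $\Pr(n.X \in \mathcal{G}) = 1$ because $n.X$ is the empty sequence.

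For $x = (a_1, \ldots, a_m) \in \mathcal{X}\setminus\mathcal{G}$ with $1.x \in \mathcal{G}$, every repetition of $x$ necessarily ends at position $m$; I would set $g_1(x) \in M$ equal to the smallest period $t$ of such a repetition. This choice mirrors the entropy-compression move precisely: inside the algorithm of Section~\ref{sec:proof}, processing the letter $\beta = 1$ when the sequence has developed a period-$t$ repetition triggers the replacement $w_{k-1} \mapsto 1^{t+1}v$, which erases the last $t$ letters of $x$ (destroying the repetition) and queues up one extra truncation attempt.

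Since $g_1(x) \geq 1$ whenever it is defined, writing $z = \func(1)$ and observing that $\underline{\func}(t+1) = z^{t+1}$ (as $t+1$ admits a unique decomposition in $B^* = \{1\}^*$), the LAL inequality (\ref{eq:LAL}) specializes to $z \geq 1 + \sum_{t \geq 1} P(1,t)\, z^{t+1}$. A direct estimate, using the independence of the free coordinates under the conditioning on $(X_1, \ldots, X_{m-t-1})\in\mathcal{G}$ and the hypothesis $|L_k| \geq 4$, gives $P(1,t) \leq 4^{-t}$: each of the $t$ forced equalities $X_{m-t+j} = X_{m-2t+j}$ ($j=1,\ldots,t$) involves at least one uniformly distributed free coordinate of size $\geq 4$, and the $t$ equalities touch pairwise disjoint free variables.

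The main obstacle is actually to produce a real $z > 0$ satisfying the LAL inequality. The naive bound $P(1,t) \leq 4^{-t}$ by itself reduces the inequality to $z \geq 1 + z^2/(4-z)$, whose discriminant is negative, so a sharper estimate is required: one must exploit the \emph{minimality} of the period in the definition of $g_1$ together with the Catalan-type combinatorics of the admissible pairs appearing in Lemma~\ref{lemma:meanbound}. This refinement is exactly the generating-function core of Grytczuk \emph{et al.}'s argument and is what brings the threshold down to the sharp value $|L_k| \geq 4$. Once such a $\func(1)$ is exhibited, Theorem~\ref{theo:LAL} immediately yields $\Pr(X \in \mathcal{G}) > 0$ and hence the existence of a non-repetitive selection $a_k \in L_k$, as required.
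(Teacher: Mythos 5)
Your framework is right—same $\mathcal{X}$, $\mathcal{G}$, monoid $\{\beta\}^*$ (your $\mathbb{N}$), and the same probability bound $4^{-t}$ per period-$t$ repetition—but there is an off-by-one in your definition of $g_\beta$ that is entirely responsible for the failing inequality, and the ``sharper estimate via minimality and Catalan combinatorics'' you invoke to fix it is a red herring.

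Concretely: if $x$ has distribution $v.X$ (a sequence of length $n-|v|$) with a period-$t$ repetition ending at its last position, then to destroy the repetition you want the next word $w_k$ to satisfy $w_k.X = X_1\cdots X_{n-|v|-t}$, i.e.\ $w_k = \beta^t v$. Since $w_k = \tau^{-1}_\varepsilon(g_\beta(x)\cdot\beta)\,v$, this forces $g_\beta(x) = \beta^{t-1}$, not $\beta^t$. Your choice $g_1(x) = t$ yields $w_k = 1^{t+1}v$, which removes $t+1$ letters, not $t$ as you claim; you are re-sampling one letter more than necessary, and this costs exactly one extra factor of $z$ in each term of the LAL sum. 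With the correct $g_\beta(x) = \beta^{t-1}$ one has $\underline{\func}(\alpha\beta) = z^t$ and inequality~(\ref{eq:LAL}) reads $z \geq 1 + \sum_{t\geq 1} z^t/4^t = (1 - z/4)^{-1}$, which $z = 2$ satisfies with equality. No appeal to minimality of the period (the paper allows an arbitrary choice of $t$) and no refinement of the $4^{-t}$ bound is needed; the discriminant problem you ran into disappears once the extra power of $z$ is removed.
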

		\begin{remk}
			It is an open problem whether the same result is true for $|L_k| \geq 3$.
		\end{remk}
		\begin{proof}
			Let $\mathcal{X}$ be the set of sequences $a_1 a_2 \ldots a_m$ with $0 \leq m \leq n$ such that $a_k \in L_k$ for all $1\leq k \leq m$, and let $\mathcal{G} \subseteq \mathcal{X}$ be the subset of all non-repetitive sequences. Let $\{\beta\}^*$ be the free monoid with one generator. Then $\{\beta\}^*$ acts on $\mathcal{X}$ with
			$\beta. a_1 a_2 \ldots a_m = a_1 a_2 \ldots a_{m-1}$
			for $m > 0$ and $\beta.\emptyset = \emptyset$. This action is, of course, measurable with respect to the $\sigma$-algebra $\Sigma \coloneqq \mathcal{P}(\mathcal{X})$. Choose a sequence $X = X_1 X_2 \ldots X_n$ of length $n$ from $\mathcal{X}$ uniformly at random. Then $\Pr(\beta^n.X \in \mathcal{G}) = 1 >0$. We want to show that $\Pr(X \in \mathcal{G}) > 0$.
			
			Suppose that $a_1 a_2 \ldots a_m \not \in \mathcal{G}$, but $a_1 a_2 \ldots a_{m-1} \in \mathcal{G}$. Then there is $1 \leq t \leq \left\lfloor m/2\right\rfloor$ such that $a_k = a_{k+t}$ for all $m-2t+1\leq k \leq m-t$. Let $g_\beta(a_1 a_2 \ldots a_m) \coloneqq \beta^{t-1}$ (if there is more than one such $t$, choose any one of them).
			
			What is $P(\beta, \beta^{t-1})$? We have
			\begin{align*}
				P(\beta, \beta^{t-1}) &= \sup_{\ell \geq 0} \Pr\left( g_\beta(\beta^\ell.X) = \beta^{t-1} \,\middle\vert\, \beta^{\ell+t}.X \in \mathcal{G} \right)\\
			&\leq \sup_{\ell \geq 0} \Pr\left( X_k = X_{k+t} \text{ for all } n - \ell - 2t + 1 \leq k \leq n - \ell - t \,\middle\vert\, X_1 \ldots X_{n-\ell - t} \in \mathcal{G} \right)
			\leq \frac{1}{4^t}.
			\end{align*}
			Hence it is enough to find a number $\func \in \mathbb{R}_+$ such that
			$$\func \geq 1 + \sum_{t = 1}^\infty \frac{\func^t}{4^t} = \frac{1}{1-\func/4},$$
			where the last equality is subject to $\func < 4$. Setting $\func = 2$ completes the proof.
		\end{proof}
	
		A vertex coloring $f$ of a graph $G(V,E)$ is \emph{non-repetitive} if there is no path $P$ in $G$ with even number of vertices (we call it the \emph{length} of a path) such that the first half of $P$ receives the same sequence of colors as the second half of $P$, i.e. if there is no path $v_1$, $v_2$, \ldots, $v_{2t}$ such that $f(v_k) = f(v_{k+t})$ for all $1\leq k \leq t$. The least number of colors that is needed for a non-repetitive coloring of $G$ is called the \emph{non-repetitive chromatic number} of $G$ and is denoted by $\pi(G)$.
		
		The first upper bound for $\pi(G)$ in terms of the maximum degree $\Delta(G)$ was given by Alon \emph{et al.} \cite{Alon3}, who proved that there is a constant $c$ such that $\pi(G)\leq c \Delta(G)^2$. Originally this result was obtained with $c = 2e^{16}$. The constant was then improved to $c = 16$ by Grytczuk \cite{Grytczuk1}, and then to $c = 12.92$ by Haranta and Jendrol' \cite{Haranta}. All these results were based on the LLL.
		
	Dujmovi\'{c} \emph{et al.} \cite{Duj} managed to improve the value of the aforementioned constant $c$ dramatically using the entropy compression method. Namely, they lowered the constant to $1$, or, to be precise, they showed that $\pi(G)\leq(1+o(1))\Delta(G)^2$ (assuming that $\Delta(G)\to\infty$).
		
	The best known bound is given by the following theorem.
	
	\begin{theo}[Gon\c{c}alves \emph{et al.} \cite{Goncalves}]
		For every graph $G(V, E)$ with maximum degree $\Delta$ we have
			$$\pi(G) \leq \left\lceil \Delta^2 + \frac{3}{2^{2/3}}\Delta^{5/3} +\frac{2^{2/3} \Delta^{5/3}}{\Delta^{1/3}-2^{1/3}} \right\rceil.$$
	\end{theo}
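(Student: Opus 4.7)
The plan is to mirror the framework already used for proper vertex colorings and for non-repetitive sequences. Let $C$ denote the claimed number of colors, take $\mathcal{X} = \mathsf{PF}(V, [C])$, let $\mathcal{G} \subseteq \mathcal{X}$ consist of the non-repetitive partial colorings (those whose restriction to any path of even length is not a repetition), and let the commutative monoid $\mathcal{P}(V)$ act on $\mathcal{X}$ by restriction, exactly as in the proof of Theorem~\ref{theo:propcol}. Any restriction of a non-repetitive coloring stays non-repetitive, so $\mathcal{G}$ is closed under the action. Choose $X$ to be a uniformly random total coloring $V \to [C]$; then $V.X = \emptyset \in \mathcal{G}$ with probability $1$, which supplies the starting element required by the LAL.

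The heart of the argument is the definition of $g_v$ and the estimate of $P(v, \alpha)$. Suppose $f \not\in \mathcal{G}$ while $v.f \in \mathcal{G}$. Then uncoloring $v$ destroys every repetitive path in $f$, so $v$ lies on all such paths; pick one repetitive path $P = u_1 u_2 \ldots u_{2t}$ through $v$ by some canonical rule and set $g_v(f) \coloneqq V(P) \setminus \{v\}$. For $\alpha \in \mathcal{P}(V)$ with $v \notin \alpha$, the value $P(v,\alpha)$ is then at most the conditional probability, given that $X$ restricted to $V \setminus (\alpha \cup \{v\} \cup \gamma)$ is non-repetitive, that some path on vertex set $\alpha \cup \{v\}$ forms a repetition. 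Since the colors on $\alpha \cup \{v\}$ are independent of those on $V \setminus (\alpha \cup \{v\} \cup \gamma)$, the conditioning leaves them uniform, and the probability that the $t$ prescribed color equalities hold on a fixed $2t$-vertex path is exactly $C^{-t}$.

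Taking $\func(v) = F$ constant reduces inequality~(\ref{eq:LAL}) to
\begin{equation*}
F \;\geq\; 1 + \sum_{t=1}^\infty N_v(2t)\,\frac{F^{2t}}{C^t},
\end{equation*}
where $N_v(2t)$ is the number of paths on $2t$ vertices in $G$ passing through $v$. A standard walk-counting argument (place $v$ at each of the $2t$ positions and extend in both directions, where every step after the first has at most $\Delta - 1$ choices for a fresh neighbor) gives $N_v(2t) \leq 2t\,\Delta(\Delta-1)^{2t-2}$. Summing the resulting geometric-type series in closed form with $r \coloneqq F^2(\Delta-1)^2/C$ collapses the requirement to
\begin{equation*}
F \;\geq\; 1 + \frac{2\Delta\, r}{(\Delta-1)^2 (1-r)^2},\qquad r \in (0,1),
\end{equation*}
and the LAL then delivers $\pi(G) \leq C$ as soon as such an $F$ exists.

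The remaining work, and the main obstacle, is analytic rather than conceptual: optimize $F$ and $C$ so that this inequality forces $C$ to match the ceiling in the theorem statement. A rough scaling $C = \Delta^2 + a\Delta^{5/3}$ with $F = 1 + \epsilon$ shows that $1 - r \approx a/\Delta^{1/3}$, reducing the requirement to $\epsilon \gtrsim 2/(a^2 \Delta^{1/3})$ while $\epsilon$ itself must satisfy $\epsilon < a/(2\Delta^{1/3})$; the critical balance $a^3 = 4$ gives the asymptotic leading coefficient $2^{2/3}$, and the sharper corrections coming from the $2/\Delta$ term inside $(1-r)$ and from $F > 1$ must then be absorbed into the explicit lower-order terms $\frac{3}{2^{2/3}}\Delta^{5/3}$ and $\frac{2^{2/3}\Delta^{5/3}}{\Delta^{1/3}-2^{1/3}}$. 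This final one-variable optimization is the delicate calculation, but it is routine once the displayed inequality is in hand.
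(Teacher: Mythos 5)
Your framework (partial colorings, $\mathcal{P}(V)$ acting by restriction, uniform random $X$) matches the paper's, but the definition of $g_v$ does not, and this difference is fatal to the final constant. You set $g_v(f) \coloneqq V(P) \setminus \{v\}$, the \emph{entire} vertex set of the witness path minus $v$, so when $P$ has $2t$ vertices the factor $\underline{\func}(\alpha\beta) = \underline{\func}(V(P))$ is $F^{2t}$, which is exactly what appears in your displayed sum. The paper instead takes $g_v(f) \coloneqq V(P'_v)$, only the \emph{half} of $P$ containing $v$; that set has $t$ vertices, so the corresponding weight is $\func^t$. Crucially, the probability bound is $|C|^{-t}$ in both cases (the colors on the $t$ free vertices must match $t$ already-determined colors), so halving the recorded set cuts the generator weight from $F^{2t}$ to $\func^t$ at no cost in the probability estimate. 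That extra factor of $F^t$ in your sum is precisely what prevents you from reaching the stated bound.

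Concretely, with $C = \Delta^2 + a\Delta^{5/3}$ and $\func = 1 + c/\Delta^{1/3}$, your inequality (after fixing the minor double-count in the path bound --- each path is generated twice, once per direction, so $N_v(2t) \leq t\Delta(\Delta-1)^{2t-2}$, not $2t\Delta(\Delta-1)^{2t-2}$) reduces at leading order to $c(a - 2c)^2 \geq 1$, which forces $a \geq (27/2)^{1/3} = 3/2^{1/3}$. The paper's version, having $\func^t$ rather than $F^{2t}$, reduces to the weaker condition that yields $a \geq 3/2^{2/3}$, which is exactly the coefficient in the theorem. Since $3/2^{1/3} > 3/2^{2/3}$, the ``routine'' one-variable optimization you defer to cannot in fact close the gap: your inequality is genuinely strictly stronger than necessary and does not admit the choice of $C$ the theorem claims. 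To repair the argument you must record only the half-path $V(P'_v)$, as the paper does; everything else in your proposal then goes through.
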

	\begin{proof}
		Our framework will be the same as in the proofs of Theorems \ref{theo:propcol} and \ref{theo:LopLLL}. Let $G(V, E)$ be a graph with maximum degree $\Delta$, and let $C$ be a set of colors of cardinality
		\begin{equation}\label{eq:non-repbound}
			|C| \geq \Delta^2 + \frac{3}{2^{2/3}}\Delta^{5/3} +\frac{2^{2/3} \Delta^{5/3}}{\Delta^{1/3}-2^{1/3}}.
		\end{equation}
		Let $\mathcal{G} \subseteq \mathsf{PF}(V, C)$ be the set of all non-repetitive partial colorings of $G$. Choose a total coloring $X \in \mathsf{PF}(V, C)$ uniformly at random. Note that $\Pr(V.X \in \mathcal{G}) = 1 >0$, and we want to prove that $\Pr(X \in \mathcal{G}) > 0$.
		
		If $f : V \dashrightarrow C$ and $f \not \in \mathcal{G}$, but $v.f \in \mathcal{G}$ for some $v \in V$, then there exists an even path $P$ contained in $\dom(f)$ that passes through $v$ and is colored repetitively by $f$. Let $g_v(f) \coloneqq V(P'_v)$, where $P'_v$ is the half of $P$ that contains $v$ (if there is more than one such path $P$, choose any one of them).
		
		Let $v \in V$ and $S \in \mathcal{P}(V)$. Then $P(v, S) \neq 0$ only if $S = V(P'_v)$ for a path $P$ of length $2t$ containing~$v$, where $t = |S|$. In the latter case let $\Pi(S)$ be the set of all paths $P$ of length $2t$ such that $S = V(P'_v)$. Then
		\begin{align*}
			P(v, S) & = \sup_{U \subseteq V} \Pr\left( g_v(X|_{V \setminus U}) = S \,\middle\vert\, X|_{V \setminus (U \cup S)} \in \mathcal{G} \right)\\
			&\leq \sup_{U \subseteq V} \sum_{P \in \Pi(S)} \Pr\left( P \text{ is colored repetitively in } X|_{V \setminus U} \,\middle\vert\, X|_{V \setminus (U \cup V(P'_v))} \in \mathcal{G} \right)\\
			&\leq \sum_{P \in \Pi(S)} \frac{1}{|C|^t}.
		\end{align*}
		
		Note that there are at most $t \Delta^{2t -1}$ paths of length $2t$ containing a given vertex $v \in V$. Indeed, there are $2t$ ways of choosing a position for $v$ on a path of length $2t$ and at most $\Delta^{2t -1}$ ways of choosing a path of length $2t$ passing through $v$ assumming that the position of $v$ on a path is fixed, but in this manner we count every path twice.
		Finally, suppose that $\func(v) = \func \in \mathbb{R}_+$ is a costant independent of $v \in V$. Then it would suffice to have
		\begin{equation}\label{eq:non-rep}
			\func \geq 1 + \sum_{t = 1}^\infty \frac{t \Delta^{2t - 1} \func^t}{|C|^t} = 1 + \frac{\Delta \func/|C|}{(1-\Delta^2 \func/|C|)^2},
		\end{equation}
		where the last equality is subject to $\Delta^2 \func/|C| < 1$. If we let $y \coloneqq \Delta^2 \func/|C|$, then (\ref{eq:non-rep}) turns into
		\begin{equation}\label{eq:non-rep2}
			\frac{|C|}{\Delta^2} \geq \frac{1}{y} + \frac{1}{\Delta(1-y)^2}.
		\end{equation}
		Following Gon\c{c}alves \emph{et al.}, we take $y = 1 - \left(2/\Delta\right)^{1/3}$, and (\ref{eq:non-rep2}) becomes
		$$
			\frac{|C|}{\Delta^2} \geq 1+ \frac{3}{2^{2/3}\Delta^{1/3}} +\frac{2^{2/3}}{\Delta^{2/3}-(2\Delta)^{1/3}}, 
		$$
		which is true by (\ref{eq:non-repbound}).
	\end{proof}
	
	\subsection{Acyclic edge colorings}\label{subsec:acyclic}
	
	An edge coloring of a graph $G$ is called an \emph{acyclic edge coloring} if it is proper (i.e. adjacent edges receive different colors) and every cycle in $G$ contains edges of at least three different colors (there are no \emph{bichromatic cycles} in $G$). The least number of colors needed for an acyclic edge coloring of $G$ is called the \emph{acyclic chromatic index} of $G$ and is denoted by $a'(G)$. The notion of acyclic (vertex) coloring was first introduced by Gr\"{u}nbaum \cite{Grunbaum}. The edge version was first considered by Fiam\v{c}ik \cite{Fiamcik}, and independently by Alon \emph{et al.} \cite{Alon1}.
	
	As in the case of non-repetitive colorings, it is quite natural to ask for an upper bound on the acyclic chromatic index of a graph $G$ in terms of its maximum degree $\Delta(G)$. Since $a'(G)\geq \func'(G) \geq \Delta(G)$, this bound must be at least linear in $\Delta(G)$. The first linear bound was given by Alon \emph{et al.} \cite{Alon1}, who showed that $a'(G)\leq 64 \Delta(G)$. Although it resolved the problem of determining the order of growth of $a'(G)$ in terms of $\Delta(G)$, it was conjectured that the sharp bound should be much lower.
	
	\begin{conj}[Fiam\v{c}ik \cite{Fiamcik}, Alon \emph{et. al.} \cite{Alon2}]\label{conj:AECC}
		For every graph $G$ we have $a'(G) \leq \Delta(G)+2$.
	\end{conj}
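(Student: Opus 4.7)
The plan is to apply Theorem~\ref{theo:LAL} in the partial-function framework used in Sections~4.1 and~4.4. Fix a color set $C$ with $|C| = \Delta+2$, take $\mathcal{X} := \mathsf{PF}(E(G), C)$, and let $\mathcal{G} \subseteq \mathcal{X}$ be the set of partial edge colorings that are both proper and acyclic. The commutative monoid $\mathcal{P}(E(G))$ acts on $\mathcal{X}$ by restriction, and every restriction of an element of $\mathcal{G}$ is again in $\mathcal{G}$. Let $X \in \mathcal{X}$ be a uniformly random total edge coloring, so that $\Pr(E(G).X \in \mathcal{G}) = 1 > 0$. The goal is to produce $\zeta : E(G) \to \mathbb{R}_+$ satisfying (\ref{eq:LAL}), after which Theorem~\ref{theo:LAL} yields $\Pr(X \in \mathcal{G}) > 0$ and hence $a'(G) \le \Delta+2$.

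For a partial coloring $f \notin \mathcal{G}$ with $e.f \in \mathcal{G}$, the unique obstruction in $f$ must use $e$: it is either (i) an edge $e'$ adjacent to $e$ with $f(e') = f(e)$, or (ii) a bichromatic even cycle of length $2t \geq 4$ through $e$. Following the template of the non-repetitive proofs, set $g_e(f)$ to be the edge set of a canonically chosen such obstruction with $e$ removed, so that $|g_e(f)| = 1$ in case~(i) and $|g_e(f)| = 2t-1$ in case~(ii). Mirroring the conditioning argument used in Theorem~\ref{theo:nonrepseq} and its successor, $P(e,\{e'\}) \le 1/|C|$ for adjacency witnesses, while for a bichromatic-cycle witness $S$ of length $2t$ one gets $P(e,S) \le 1/|C|^{2t-2}$, since conditional on a proper acyclic partial coloring of $E(G)\setminus(S\cup\{e\})$ the colors on $S \cup \{e\}$ are uniform and independent, and the bichromatic patterns account for at most $|C|(|C|-1)$ out of $|C|^{2t}$ completions. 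Counting at most $2(\Delta-1)$ edges adjacent to $e$ and at most $\Delta^{2t-2}$ cycles of length $2t$ through $e$, and taking $\zeta(e)\equiv\zeta$ constant, inequality~(\ref{eq:LAL}) collapses to the scalar condition
\[
    \zeta \;\ge\; 1 \;+\; \frac{2(\Delta-1)\zeta}{|C|} \;+\; \sum_{t \ge 2} \Delta^{2t-2}\cdot\frac{\zeta^{2t-1}}{|C|^{2t-2}}.
\]
Substituting $|C| = \Delta+2$ and writing $y := \Delta^2 \zeta/|C|^2$ turns the tail into a geometric series in $y$, convergent for $y<1$; the plan is then to exhibit a $\zeta \in \mathbb{R}_+$ that solves the resulting rational inequality.

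The hard part is precisely this last step, and it is where the plan must break short of proving Conjecture~\ref{conj:AECC}. With $|C| = \Delta+2$, the linear contribution $2(\Delta-1)\zeta/|C|$ alone forces $\zeta > \Omega(1)$, while the tail series with the naive cycle count $\Delta^{2t-2}$ already diverges unless $\zeta < (1+O(1/\Delta))\cdot(\Delta+2)/\Delta$, and within that narrow window the tail contribution is of order $\Delta$ rather than $o(1)$; thus no constant $\zeta$ satisfies the inequality when $|C|$ is pushed down to $\Delta+2$. Salvaging the argument would require replacing the crude bound $\Delta^{2t-2}$ on the number of length-$2t$ cycles through $e$ by a substantially sharper count that exploits the global structural constraint that each of the two color classes of a bichromatic cycle is itself a partial matching respecting properness elsewhere in $G$. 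This refined counting is precisely the combinatorial heart of the conjecture, and no known encoding within the LLL, Improved LLL, entropy-compression, or LAL family captures it tightly enough; the plan sketched here, honestly executed, yields only $a'(G) \le c\,\Delta(G)$ with some constant $c > 1$, matching the current state of the art rather than settling Conjecture~\ref{conj:AECC}, and closing the remaining gap appears to require genuinely new combinatorial input beyond the LAL itself.
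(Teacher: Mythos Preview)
The statement you were asked to prove is labeled a \emph{conjecture} in the paper, and the paper explicitly remarks that it ``is still far from being proven.'' There is no proof in the paper to compare your proposal against; the paper only proves the weaker Theorem~\ref{theo:acyclic}, namely $a'(G) \le 4(\Delta-1)$.

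Your proposal is therefore appropriate in spirit: you set up the LAL instance, derive the scalar inequality that~(\ref{eq:LAL}) reduces to, and then honestly explain why no $\zeta$ satisfies it when $|C| = \Delta + 2$. Your diagnosis is correct --- the linear term $2(\Delta-1)\zeta/|C|$ already forces $\zeta$ to be bounded away from~$1$, while the cycle tail with the crude count $\Delta^{2t-2}$ then blows up --- and your conclusion that the method yields only $a'(G) \le c\Delta$ for some $c > 1$ is exactly the state of the art recorded in the paper.

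One small discrepancy worth noting: the paper's own encoding (in both proofs of Theorem~\ref{theo:acyclic}) differs from yours. For adjacency and $4$-cycle violations the paper sets $g_e(f) = \emptyset$ and invokes Claim~\ref{claim:acyclicext} to bound $P(e,\emptyset) \le \tfrac12$; for longer cycles it takes $g_e(f) = K'_e$ of size $2t-2$ (the cycle minus two adjacent edges), not $2t-1$. This shifts the exponents in the resulting inequality and is what allows the paper to reach the constant~$4$, but it does not change your qualitative conclusion: pushing $|C|$ down to $\Delta+2$ remains out of reach.
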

	
	Note that the bound in Conjecture \ref{conj:AECC} is only one more than Vizing's bound on the chromatic index of $G$. However, this elegant conjecture is still far from being proven.
	
	The first major improvement to the bound $a'(G)\leq 64 \Delta(G)$ was made by Molloy and Reed \cite{Molloy}, who proved that $a'(G) \leq 16 \Delta(G)$. This bound remained the best for a while, until Ndreca \emph{et al.} \cite{Ndreca} managed to improve it to $a'(G)\leq \left\lceil 9.62(\Delta(G)-1)\right\rceil$. Again, first bounds for $a'(G)$ were obtained using the LLL. The bound $a'(G)\leq \left\lceil 9.62(\Delta(G)-1)\right\rceil$ by Ndreca \emph{et al.} used an improved version of the LLL due to Bissacot \emph{et al.} \cite{Bissacot}.
	
	The best current bound for $a'(G)$ in terms of $\Delta(G)$ was obtained recently by Esperet and Parreau via the entropy compression method.
	\begin{theo}[Esperet \& Parreau \cite{Esperet}]\label{theo:acyclic}
		For every graph $G(V, E)$ with maximum degree $\Delta$ we have $a'(G) \leq 4(\Delta-1)$.
	\end{theo}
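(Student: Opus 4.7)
The plan is to apply the LAL in the partial edge-coloring framework, as in the proofs of Theorems~\ref{theo:propcol} and~\ref{theo:LopLLL}. Let $C$ be a palette of $4(\Delta-1)$ colors, set $\mathcal{X} := \mathsf{PF}(E, C)$, and let $\mathcal{G} \subseteq \mathcal{X}$ consist of all acyclic partial edge colorings. The monoid $\mathcal{P}(E)$ acts on $\mathcal{X}$ by restriction, and $\mathcal{G}$ is closed under this action since any restriction of an acyclic coloring is acyclic. Let $X$ be a uniformly random total edge coloring, so $\Pr(E.X \in \mathcal{G}) = 1$, and the goal is to deduce $\Pr(X \in \mathcal{G}) > 0$.

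Suppose $f \notin \mathcal{G}$ but $e.f \in \mathcal{G}$; then every defect of $f$ involves $e$ and is either a proper conflict with an adjacent edge or a bichromatic even cycle through $e$. Following the entropy-compression strategy of Esperet and Parreau, I would define $g_e(f)$ by cases: if a proper defect is present, set $g_e(f) := \emptyset$; otherwise pick (by a fixed tie-breaking rule) a bichromatic $2k$-cycle $\Gamma_e(f)$ through $e$ and let $g_e(f)$ be the edge set of a designated ``half'' of $\Gamma_e(f)$, namely a sub-path of $k-1$ edges attached to $e$. This refinement plays the same role as the witness $\beta^{t-1}$ in the proof of Theorem~\ref{theo:nonrepseq}: it makes $\underline{\func}(g_e(f) \cup \{e\})$ scale as $\func^{k}$ rather than $\func^{2k}$.

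With $\func(e) = \func$ constant, the LAL inequality \eqref{eq:LAL} reduces to
\begin{equation*}
\func \;\geq\; 1 + \frac{2(\Delta-1)}{|C|}\,\func \;+\; \sum_{k \geq 2} N_k \cdot \frac{|C|-1}{|C|^{2k-1}} \cdot \func^{k},
\end{equation*}
where $2(\Delta-1)$ bounds the number of edges adjacent to $e$, each conflicting with probability at most $1/|C|$, while $N_k \leq 2(\Delta-1)^{k-1}$ counts the half-cycle witnesses (paths of $k-1$ edges attached to an endpoint of $e$), and $(|C|-1)/|C|^{2k-1}$ is the probability that a specific $2k$-cycle is bichromatic. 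Substituting $|C| = 4(\Delta-1)$ collapses the tail into a convergent geometric-type series in $\func/(16(\Delta-1))$, and a direct check shows that the inequality admits a positive constant solution $\func$ for every $\Delta$ large enough, with the few small values of $\Delta$ handled by standard direct constructions.

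The main obstacle is the passage from unconditional to conditional probabilities: the LAL actually demands bounds on $\Pr(g_e(\gamma.X) = S \mid \alpha\beta\gamma.X \in \mathcal{G})$, not on the unconditional probability of the corresponding defect. The reduction is of lopsided LLL type, analogous to the proof of Theorem~\ref{theo:LopLLL}: forcing an alternating two-coloring on the edges of a cycle can only increase the likelihood of bichromatic cycles elsewhere, so the acyclicity of the restricted coloring is negatively correlated with the witness event, and Bayes' rule delivers the needed inequality. Verifying this correlation step carefully, together with the combinatorial half-cycle count, is the only non-routine work remaining.
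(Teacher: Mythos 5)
Your framework (the monoid $\mathcal{P}(E)$ acting on $\mathsf{PF}(E,C)$ by restriction, with $\mathcal{G}$ the acyclic partial colorings and $X$ uniform on total colorings) matches the paper's second proof, and the treatment of proper conflicts by $g_e(f)=\emptyset$ is fine. The genuine gap is your choice of witness for a bichromatic $2k$-cycle. You propose $g_e(f)=S$ where $S\cup\{e\}$ is a half-path of $k$ edges, so that $\underline{\func}(\alpha\beta)=\func^k$, by analogy with the $\beta^{t-1}$ witness in Theorem~\ref{theo:nonrepseq}. But the LAL requires the \emph{conditional} probability $\Pr\bigl(g_e(X|_{E\setminus U})=S\,\big\vert\,X|_{E\setminus(U\cup S\cup\{e\})}\in\mathcal{G}\bigr)$, and this conditioning \emph{fixes} the colors of the other $k$ edges of each candidate cycle $\Gamma\supseteq S\cup\{e\}$. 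Since $X$ is uniform and independent across edges, the conditional probability that a fixed such $\Gamma$ is bichromatic is $1/|C|^k$, not the unconditional $(|C|-1)/|C|^{2k-1}$; the inequality goes in the opposite direction from what you assert ($1/|C|^k > (|C|-1)/|C|^{2k-1}$ for $k\ge 2$), so the ``negative correlation'' reduction you invoke cannot save the estimate. Moreover, $P(e,S)$ must be summed over all cycles sharing the half $S\cup\{e\}$, of which there are up to $(\Delta-1)^{k-1}$, a factor missing from your display. Putting in the correct values, the $k$-th term of $\sum_{\alpha}P(e,\alpha)\underline{\func}(\alpha e)$ is of order $(\Delta-1)^{2k-2}\cdot|C|^{-k}\cdot\func^{k}$, which with $|C|=4(\Delta-1)$ equals $\Theta\bigl((\Delta-1)^{k-2}(\func/4)^k\bigr)$; for any constant $\func$ this series diverges once $\Delta$ is large. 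So the half-cycle witness is structurally too small: it records $k$ symbols' worth of ``charge'' $\func^k$ against a probability that only decays like $|C|^{-k}$, while the number of cycles through $e$ grows like $(\Delta-1)^{2k-2}$.

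The paper avoids this by taking the witness to be $K'_e$, the cycle $K$ with two arbitrary adjacent edges deleted, so $|\alpha\beta|=2t-2$. Then $\underline{\func}(\alpha\beta)=\func^{2t-2}$, the conditional probability per cycle is $1/|C|^{2t-2}$ (only two cycle edges are fixed by the conditioning), and the cycle count $(\Delta-1)^{2t-2}$ is exactly absorbed, yielding $(\func/4)^{2t-2}$ per $t$ and the convergent inequality $\func\ge 1+\func/2+\sum_{t\ge 3}(\func/4)^{2t-2}$. (The paper's first proof obtains a slightly cleaner form by further restricting $\mathcal{X}$ itself to proper, $4$-cycle-free colorings and invoking Claim~\ref{claim:acyclicext} to improve the per-cycle conditional bound to $(2(\Delta-1))^{-(2t-2)}$.) The analogy with non-repetitive sequences is misleading: there the monoid has a single generator and one witness of each length, so there is no $(\Delta-1)^{k-1}$-fold multiplicity to absorb. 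You would need to replace the half-cycle witness with the near-full-cycle witness $K'_e$ for the dimensional bookkeeping to close.
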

	Below we give two LAL-based proofs of Theorem \ref{theo:acyclic}, that differ in their ways of ``translating'' the algorithmic proof by Esperet and Parreau into the language of probability.
	
	\begin{proof}[First proof]
		We start with the following claim, that plays a crucial role in our proof as well as in the original proof by Esperet and Parreau.
		\begin{claim}\label{claim:acyclicext}
			Let $G(V, E)$ be a graph with maximum degree $\Delta$. Suppose that some of its edges are colored properly by $k > 2\Delta$ colors, and suppose that an edge $e \in E$ is uncolored. Then there exist at least $k - 2(\Delta-1)$ ways to color $e$ so that the resulting coloring is still proper and does not contain bichromatic $4$-cycles going through $e$.
		\end{claim}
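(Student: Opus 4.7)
The plan is a direct combinatorial count of the forbidden colors for the uncolored edge $e = uv$, with an inclusion-exclusion cancellation that is the heart of the argument. Let $A$ denote the set of colors appearing on colored edges incident to $u$ other than $e$, and $B$ the analogous set at $v$; since the existing partial coloring is proper, $|A|, |B| \leq \Delta - 1$. A naive bound---properness forbids $|A \cup B|$ colors, while bichromatic-$4$-cycle avoidance could independently forbid up to $\Delta - 1$ more---would only yield about $3(\Delta - 1)$ forbidden colors, which is too weak. The key observation I plan to exploit is that the cycle-forbidden colors arise \emph{only} from colors that are already double-counted inside $|A| + |B|$.

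The first step is to analyze the structure of a bichromatic $4$-cycle through $e$: such a cycle has the form $u,w,x,v,u$ in which the two edges incident to the endpoints of $e$ share a common color $c'$, while the opposite edge $wx$ shares the prospective color $c$ of $e$, with $c \neq c'$. The crucial consequence is $c' \in A \cap B$, since $c'$ appears at both $u$ and $v$. Furthermore, because the given coloring is proper, the edge $uw$ at $u$ carrying color $c'$ is uniquely determined by $c'$, and likewise for $vx$; moreover $w \neq x$, since otherwise two edges at $w$ would share color $c'$. Hence the candidate edge $wx$---and so the single color $c$ that can be forbidden on account of this $c'$---is uniquely determined by $c'$.

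The final step is the accounting. Adjacency-forbidden colors number $|A \cup B| = |A| + |B| - |A \cap B|$, while cycle-forbidden colors contribute at most $|A \cap B|$ by the preceding paragraph. Adding these gives a total of at most $|A| + |B| \leq 2(\Delta - 1)$ forbidden colors, so at least $k - 2(\Delta - 1)$ colors remain available to assign to $e$ while preserving properness and avoiding bichromatic $4$-cycles through $e$. The main conceptual hurdle is simply spotting the cancellation: once one recognises that bichromatic $4$-cycles through $e$ are naturally indexed by the colors in $A \cap B$, the bound follows with no probabilistic machinery needed---this claim is a purely structural statement that will be fed into the LAL framework in the surrounding proof of Theorem~\ref{theo:acyclic}.
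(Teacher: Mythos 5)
Your proof is correct and follows essentially the same reasoning as the paper's: the paper also observes that each color $c$ forbidden by a bichromatic $4$-cycle through $e$ corresponds to a pair of edges adjacent to $e$ that share a color, and then charges adjacency-forbidden colors plus cycle-forbidden colors against the $2(\Delta-1)$ edges adjacent to $e$. Your version merely makes the inclusion-exclusion cancellation explicit by naming $A$, $B$, and $A \cap B$, which is a slightly cleaner way to present the identical count.
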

		Indeed, let $e = uv$. If coloring $e$ with a color $c$ creates a bichromatic $4$-cycle $uvxy$, then the edges $vx$ and $uy$ must be already colored the same. On the other hand, any pair of edges $vx$ and $uy$ that are colored the same can give rise to at most one bichromatic $4$-cycle, namely $uvxy$. The number of edges adjacent to $e$ is at most $2(\Delta-1)$, and to each color that creates a bichromatic $4$-cycle corresponds exactly one pair of edges adjacent to $e$ that share their color. Therefore, the number of ``forbidden'' colors for $e$ is at most $2(\Delta - 1)$, which establishes the claim.
		
		The crucial idea of \cite{Esperet} (which is credited to Jakub Kozik by the authors) is to handle $4$-cycles and cycles of length at least $6$ separately. It can be done in the probabilistic framework in the following way. Let $C$ be a set of colors of cardinality $|C| \geq 4(\Delta-1)$. Claim \ref{claim:acyclicext} ensures then that there exist proper edge colorings of $G$ by colors $C$ which do not contain bichromatic $4$-cycles. Let $\mathcal{X} \subseteq \mathsf{PF}(E, C)$ be the set of all such partial colorings, and let $\mathcal{G} \subseteq \mathcal{X}$ be the set of all acyclic partial colorings. Choose a total coloring $X \in \mathcal{X}$ uniformly at random. We want to show that $\Pr(X \in \mathcal{G}) > 0$.
		
		If $f \in \mathcal{X}\setminus \mathcal{G}$ and $e.f \in \mathcal{G}$ for some $e \in E$, then there is a cycle $K$ of length at least $6$ contained in $\dom(f)$ that passes through $e$ and is colored bichromatically by $f$. Let $g_e(f)\coloneqq K'_e$, where $K'_e$ is a subset of $E(K)$ that contains $e$ and all other edges of $K$ except for two arbitrary adjacent edges (if there is more than one such $K$, choose any one of them).
		
		Now $P(e, S) \neq 0$ only if $S = K'_e$ for some cycle $K$ of length $2t\geq 6$, where $2t = |S|+2$. In the latter case let $\Theta(S)$ be the set of all cycles $K$ such that $K'_e = S$. Then
		\begin{align*}
			P(e, S) & = \sup_{F \subseteq E} \Pr\left( g_e(X|_{E \setminus F}) = S \,\middle\vert\, X|_{E \setminus (F \cup S)} \in \mathcal{G} \right)\\
			&\leq \sup_{F \subseteq E} \sum_{K \in \Theta(S)} \Pr\left( K \text{ is colored bichromatically in } X|_{E \setminus F} \,\middle\vert\, X|_{E \setminus (F \cup K'_e)} \in \mathcal{G} \right).
		\end{align*}
		Note that the probability
		$$\Pr\left( K \text{ is colored bichromatically in } X|_{E \setminus F} \,\middle\vert\, X|_{E \setminus (F \cup K'_e)} \in \mathcal{G} \right)$$
		is non-zero only if $E(K) \subseteq E \setminus F$. In that case the set $E \setminus (F \cup K'_e)$ contains exactly two adjacent edges of $K$. If $f : E \setminus K'_e \to C$ is an acyclic coloring, then due to Claim \ref{claim:acyclicext} it can be extended to the whole set $E$ properly and without bichromatic $4$-cycles in at least $(2(\Delta - 1))^{2t - 2}$ ways (recall that $|C| \geq 4(\Delta-1)$), but $K$ is bichromatic in at most one of them. Therefore,
		\begin{align*}
			\sup_{F \subseteq E} &\sum_{K \in \Theta(S)} \Pr\left( K \text{ is colored bichromatically in } X|_{E \setminus F} \,\middle\vert\, X|_{E \setminus (F \cup K'_e)} \in \mathcal{G} \right)\\
			&\leq \sum_{K \in \Theta(S)} \frac{1}{(2(\Delta - 1))^{2t - 2}}.
		\end{align*}
		
		Since there are at most $(\Delta-1)^{2t-2}$ cycles of length $2t$ passing through a given edge $e$, it is enough to find a number $\func \in \mathbb{R}_+$ such that
	\begin{equation}\label{eq:acyc}
		\func \geq 1 + \sum_{t = 3}^\infty \frac{(\Delta-1)^{2t-2} \func^{2t-2}}{(2(\Delta - 1))^{2t - 2}} = 1 + \frac{\left(\func/2\right)^4}{1-\left(\func/2\right)^2},
	\end{equation}
	where the last equality is subject to $\func/2 < 1$. Setting $\func = \sqrt{5}-1$ completes the proof.
	\end{proof}
	
	\begin{proof}[Second proof]
		In the previous proof the set $\mathcal{X}$ was constructed in a clever and nontrivial way. It turns out, however, that the LAL is flexible enough to produce the same result in the case where the random coloring is just chosen uniformly at random from $C^E$.
		
		Let $\mathcal{G}\subseteq \mathsf{PF}(E, C)$ be the set of all acyclic partial colorings, and let $X$ be a total coloring chosen from $\mathsf{PF}(E, C)$ uniformly at random. If $f : E \dashrightarrow C$ and $f \not \in \mathcal{G}$, but $e.f \in \mathcal{G}$ for some $e \in E$, then one of the following holds.
		\begin{enumerate}
			\item There is an edge $h \in \dom(f)$ that is adjacent to $e$ and $f(e) = f(h)$. In this case let $g_e(f) \coloneqq \emptyset$.
			\item There is a $4$-cycle contained in $\dom(f)$ that passes through $e$ and is colored bichromatically by~$f$. In this case let $g_e(f) \coloneqq \emptyset$.
			\item There is a cycle $K$ of length at least $6$ contained in $\dom(f)$ that passes through $e$ and is colored bichromatically by $f$. In this case let $g_e(f)\coloneqq K'_e$, where $K'_e$ is defined in the same way as in the previous proof.
		\end{enumerate}
		Again, if there is some ambiguity in the definition of $g_e(f)$, then choose any available option.
		
		Now $P(e, S) \neq 0$ only if $S = \emptyset$ or $S = K'_e$ for some cycle $K$ of length $2t\geq 6$, where $2t = |S|+2$. In the first case
		\begin{align*}
			P(e, \emptyset) = \sup_{F \subseteq E} \Pr\left( g_e(X|_{E \setminus F}) = \emptyset \,\middle\vert\, X|_{E \setminus (F \cup \{e\})}\in\mathcal{G} \right) \,\leq\, \frac{2(\Delta - 1)}{4(\Delta - 1)} = \frac{1}{2},
		\end{align*}
		where the inequality is due to Claim \ref{claim:acyclicext}. In the second case let $\Theta(S)$ be the set of all cycles $K$ such that $K'_e = S$. Then, analogously to the first proof,
		\begin{align*}
			P(e, S) & = \sup_{F \subseteq E} \Pr\left( g_e(X|_{E \setminus F}) = S \,\middle\vert\, X|_{E \setminus (F \cup S)} \in \mathcal{G} \right)\\
			&\leq \sup_{F \subseteq E} \sum_{K \in \Theta(S)} \Pr\left( K \text{ is colored bichromatically in } X|_{E \setminus F} \,\middle\vert\, X|_{E \setminus (F \cup K'_e)} \in \mathcal{G} \right)\\
			&\leq \sum_{K \in \Theta(S)} \frac{1}{(4(\Delta - 1))^{2t - 2}}.
		\end{align*}
		Finally, it is enough to find a number $\func \in \mathbb{R}_+$ such that
		\begin{equation}\label{eq:acyc1}
			\func \geq 1 + \sum_{t = 3}^\infty \frac{(\Delta-1)^{2t-2} \func^{2t-2}}{(4(\Delta - 1))^{2t-2}} + \frac{\func}{2} = 1 + \frac{\left(\func/4\right)^4}{1-\left(\func/4\right)^2}+\frac{\func}{2},
		\end{equation}
	under the assumption that $\func/4 < 1$. If we denote $y = \func/2$, then (\ref{eq:acyc1}) turns into
		\begin{align*}
			y \geq 1 + \frac{\left(y/2\right)^4}{1-\left(y/2\right)^2},
		\end{align*}
		which is the same as (\ref{eq:acyc}), so we can take $y = \sqrt{5}-1$.
	\end{proof}
	
	\subsection{Ramsey numbers}
	
	When there are many different types of ``bad'' events that must be avoided, the LLL results in many inequalities with many variables, because it must assign different numbers to different events. The LAL, on the other hand, assigns numbers to the generators of the monoid, which are usually ``homogeneous'' in some sense, even if there are many ``bad'' events to handle. For example, if we want to prove an upper bound for the acyclic chromatic index of a graph using the LLL, we have to introduce different variables for the events that happen when two adjacent edges are colored the same, and for the events corresponding to bichromatic cycles of different lengths. Then we have to write down the inequalities for each of these events, and solve them simultaneously. The resulting problem, though purely computational, is not very pleasant. On the other hand, as we saw in Section \ref{subsec:acyclic}, the LAL gives us in this case only one inequality with a single variable $\func$.
	
	Let us illustrate this advantage of the LAL with the following example.
	
	\begin{theo}[Erd\H{o}s; Spencer \cite{AS}]\label{theo:ramsey}
		There is a positive constant $c$ such that $R(3, k) \geq (c + o(1)) \left(\frac{k}{\log k}\right)^2$, where $R(\ell, k)$ denotes the (off-diagonal) Ramsey number.
	\end{theo}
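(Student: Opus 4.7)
The plan is to realize the Ramsey problem in exactly the framework of Sections \ref{subsec:LLL} and \ref{subsec:acyclic}, so that triangles and independent $k$-sets become two families of ``bad'' substructures handled uniformly by a single inequality for one scalar parameter $\func$.

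I would take $\mathcal{X} = \mathsf{PF}(E(K_n), \{0, 1\})$ with $\mathcal{P}(E(K_n))$ acting by restriction, and let $\mathcal{G} \subseteq \mathcal{X}$ be the set of partial edge-labellings $f$ whose domain contains neither three edges forming a triangle all labelled $1$, nor all $\binom{k}{2}$ edges of some $k$-vertex subset all labelled $0$. Since restriction only shrinks $\dom(f)$, $\mathcal{G}$ is invariant under the action. For $X$ I would use the total labelling corresponding to $G(n,p)$, with independent $\mathrm{Bernoulli}(p)$ values on each edge, the parameters $n$ and $p$ to be chosen below. Then $E(K_n).X = \emptyset \in \mathcal{G}$, so the hypothesis $\Pr(\alpha.X \in \mathcal{G}) > 0$ is immediate for $\alpha = E(K_n)$.

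For $f \notin \mathcal{G}$ with $e.f \in \mathcal{G}$, the edge $e$ must either complete a $1$-monochromatic triangle or close off a $0$-monochromatic $K_k$ in $\dom(f)$. In the first case set $g_e(f)$ to be the pair of other edges of such a triangle; in the second, the $\binom{k}{2} - 1$ other edges of such a $K_k$ (ties broken arbitrarily). Exactly as in the computations of Sections \ref{subsec:LLL} and \ref{subsec:acyclic}, the values of $X$ on the edges of the relevant structure are independent of the conditioning event, which yields $P(e, \{e_1, e_2\}) \leq p^3$ for triangle-type $\alpha$ and $P(e, E(S) \setminus \{e\}) \leq (1-p)^{\binom{k}{2}}$ for $k$-set-type $\alpha$. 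Taking $\func(e) \equiv \func$ constant and counting the $n - 2$ triangles and $\binom{n-2}{k-2}$ $k$-sets through a given edge, the LAL condition (\ref{eq:LAL}) reduces to the single inequality
\[
\func \,\geq\, 1 + (n-2)\, p^3 \func^3 \,+\, \binom{n-2}{k-2} \bigl((1-p)\func\bigr)^{\binom{k}{2}}.
\]
The endgame is then a standard tuning: pick $p = C \log k / k$ for a large constant $C$ and $\func = 1 + \delta$ with $\delta = \Theta(p)$. Using $\binom{n-2}{k-2} \leq (ne/k)^{k-2}$ and $1 - p \leq e^{-p}$, one checks that with $n$ of order $(k/\log k)^2$ and a sufficiently small hidden constant the $k$-set term is $o(\delta)$, while the triangle term is $O(np^3) = O(\log k / k) = o(\delta)$ as well. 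The inequality then holds and Theorem \ref{theo:LAL} yields $\Pr(X \in \mathcal{G}) > 0$, producing a graph on $n$ vertices with neither a triangle nor an independent $k$-set, hence $R(3, k) > n = (c + o(1))(k/\log k)^2$.

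The main obstacle, as the author already anticipates at the start of the subsection, is that in most LLL treatments of this problem one must solve a two-variable system (one parameter per type of bad event). Here the LAL packages everything into one inequality, but the price is that $\func$ must play two incompatible roles: the triangle term is amplified by $\func^3$ and prefers small $\func$, while the $k$-set term is amplified by $\func^{\binom{k}{2}}$ and forces $\func$ to be very close to $1$. Verifying that the choice $\func = 1 + \Theta(\log k / k)$, $p = \Theta(\log k / k)$ simultaneously controls both terms — essentially the same balance point as in Spencer's asymmetric LLL argument, now collapsed into one scalar — is the only delicate step.
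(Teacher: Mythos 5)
Your LAL instance is identical to the paper's: the same $\mathsf{PF}(E(K_n),\{0,1\})$ with the restriction action, the same $\mathcal{G}$, the same $g_e$ pointing at the offending triangle or $K_k$, the same bounds $P(e,\cdot)\leq p^3$ and $\leq (1-p)^{\binom{k}{2}}$, and the identical single inequality $\func \geq 1 + (n-2)p^3\func^3 + \binom{n-2}{k-2}\bigl((1-p)\func\bigr)^{\binom{k}{2}}$. You diverge only in how you certify that this inequality is solvable, and there you miss the observation the subsection is built around. The paper substitutes $x := p\func$ and $y := (1-p)\func$; since $\func = x + y$, the inequality decouples as $\bigl(x - (n-2)x^3\bigr) + \bigl(y - \binom{n-2}{k-2}y^{\binom{k}{2}}\bigr) \geq 1$, two independent one-variable maximizations that reproduce Spencer's asymmetric-LLL system exactly. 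So the tension you flag at the end — $\func$ having to serve two ``incompatible roles'' — is not really there: the single scalar $\func$ together with the free parameter $p$ carries the same two degrees of freedom as the usual two-variable LLL system, and the substitution makes this transparent. Your direct tuning ($p$ and $\func - 1$ both $\Theta(\log k/k)$, constant in $n$ small) can be pushed through, but note that your claim that the triangle term $(n-2)p^3\func^3$ is $o(\delta)$ is only an artifact of already taking the constant in $n$ small; at the optimal constant that term is $\Theta(\delta)$, and it (together with the requirement $\delta < p$ so that $(1-p)\func < 1$) is precisely what pins down $c$. The decoupling change of variables avoids that delicate joint calibration altogether, which is the payoff the paper is advertising.
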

	\begin{remk}
		The best current bound is $R(3, k) \geq c \frac{k^2}{\log k}$ due to Kim \cite{Kim}.
	\end{remk}
	Note that the LLL-based proof of Theorem \ref{theo:ramsey} by Spencer boils down to the following analytic problem: What is the largest possible value of $n$ for which there exist $0 \leq x$, $y$, $p < 1$ with
	$$
		\left\{
			\begin{array}{rcl}
								p^3 & \leq & x (1-x)^{3n} (1-y)^{n \choose k}; \\
								(1-p)^{n \choose 2} & \leq & y (1-x)^{k^2 n/2} (1-y)^{n \choose k}?
			\end{array} 
		\right.
	$$
	\begin{proof}
		Let us color the edges of $K_n$ red and blue randomly and independently, choosing the blue color for each edge with probability $p$. It gives us a random total function $X \in \mathsf{PF}(E(K_n), \{\text{red, blue}\})$. Let $\mathcal{G} \subseteq \mathsf{PF}(E(K_n), \{\text{red, blue}\})$ be the set of all partial colorings with no blue triangle or red $K_k$. We want to prove that $\Pr(X \in \mathcal{G}) > 0$ for a suitable choice of $p$ and $n < (c + o(1)) \left(\frac{k}{\log k}\right)^2$.
		
		If $f : E(K_n) \dashrightarrow \{\text{red, blue}\}$ is not in $\mathcal{G}$, but $e.f \in \mathcal{G}$ for some $e \in E(K_n)$, then one of the following holds.
		\begin{enumerate}
			\item There is a blue triangle $\Delta$ in $\dom(f)$ that contains the edge $e$. Then let $g_e(f) \coloneqq E(\Delta)$.
			\item There is a red subgraph $H \cong K_k$ in $\dom(f)$ that contains the edge $e$. Then let $g_e(f) \coloneqq E(H)$.
		\end{enumerate}
		As usual, if there is some ambiguity in the definition of $g_e(f)$, then choose any available option.
		
		Now $P(e, S) \neq 0$ only if $S = E(H)$ for a graph $H$ that contains $e$ and is isomorphic to either $K_3$ or $K_k$. In the first case,
		\begin{align*}
			P(e, E(\Delta)) = p^3,
		\end{align*}
		and in the second case,
		$$
			P(e, E(H)) = (1-p)^{k \choose 2}.
		$$
		Since an edge $e$ is contained in $n-2$ triangles and ${n-2 \choose k-2}$ copies of $K_k$, it is enough to find a number $\func \in \mathbb{R}_+$ such that
		\begin{equation}\label{eq:ramsey}
			\func \geq 1 + (n-2)p^3\func^3 + {n-2 \choose k-2} (1-p)^{k \choose 2} \func^{k \choose 2}.
		\end{equation}
		So we get one inequality with two variables ($p$ and $\func$). But if we denote $x \coloneqq p \func$ and $y \coloneqq (1-p)\func$, then (\ref{eq:ramsey}) turns into
		$$
			\left(x - (n-2)x^3\right) + \left(y - {n-2 \choose k-2} y^{k \choose 2}\right) \geq 1.
		$$
		Hence we can optimize
		$$
			x - (n-2)x^3
		$$
		and
		$$
			y - {n-2 \choose k-2} y^{k \choose 2}
		$$
		separately, which is an easy one-variable maximization problem. The rest of the proof follows.
	\end{proof}
 	
	\subsection{A non-combinatorial example}
	
	All examples of using the LAL that we discussed before were combinatorial in nature and closely realted to the LLL, which is not surprising, since the LAL was intended to be a generalization of the LLL. Nevertheless, the framework of the LAL is more general than the usual LLL framework. Here we give only one easy example of an application of the LAL that is far from usual type of results obtained using the Local Lemma.
	
	\begin{theo}\label{theo:array}
		Let $(a_{ij})_{i, j = 1}^\infty$ be a collection of non-negative real numbers such that $$0 < \sum_{i, j = 1}^\infty a_{ij} < \infty.$$ For $i$, $j \geq 1$ let
		$$b_{ij} \coloneqq \frac{a_{ij}}{\sum_{k = 1}^\infty \sum_{\ell = 1}^{i + j - 1} a_{k \ell}}.$$
		Here we assume that $0/0 = 0$. Then for every $x \in \mathbb{R}_+$ we have
		$$1 + \sum_{i = 1}^\infty \left(\sup_{j\geq 1} b_{ij}\right)x^i \,>\,x.$$
	\end{theo}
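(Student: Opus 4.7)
The plan is to apply Theorem \ref{theo:LAL} in its contrapositive form. Specifically, I will construct an instance of the LAL in which $\Pr(X \in \mathcal{G}) = 0$ but $\Pr(\alpha.X \in \mathcal{G}) > 0$ for some $\alpha \in M$, while the single inequality (\ref{eq:LAL}) for this instance, with $x := \func(\beta)$, becomes exactly $x \geq 1 + \sum_{i\geq 1} \bigl(\sup_{j\geq 1} b_{ij}\bigr)\,x^i$. Theorem \ref{theo:LAL} will then force this inequality to be unsatisfiable for every $x \in \mathbb{R}_+$, which is the claim (the case $x = 0$ is already clear since the left-hand side is at least $1$).

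For the construction, let $M = \{\beta^n : n \geq 0\}$ be the free monoid on a single generator $\beta$, let $\mathcal{X} = \{(i,j) : i\geq 1,\ j\geq 0\}$ with the power-set $\sigma$-algebra, and let $\beta$ act via $\beta.(i,j) := (i, \max(j-1, 0))$. Put $\mathcal{G} := \{(i,0) : i\geq 1\}$; this is closed under the action. With $Z := \sum_{i,j\geq 1} a_{ij} \in (0,\infty)$, let $X = (I, J)$ be the random variable on $\mathbb{N}_{\geq 1}^2$ with $\Pr(X = (i,j)) = a_{ij}/Z$. Then $\Pr(X \in \mathcal{G}) = \Pr(J = 0) = 0$, while $\Pr(\beta^N.X \in \mathcal{G}) = \Pr(J \leq N)$ is positive for all sufficiently large $N$, so the hypotheses of Theorem \ref{theo:LAL} are in force.

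The elements $x \notin \mathcal{G}$ with $\beta.x \in \mathcal{G}$ are precisely those of the form $(i, 1)$, and I set $g_\beta(i, 1) := \beta^{i-1}$. With the generic element $\gamma = \beta^{j-1}$ of $M$ (ranging over $j \geq 1$) one has $\gamma.X = (I, \max(J - j + 1, 0))$, so $g_\beta(\gamma.X) = \beta^{i-1}$ holds iff $(I, J) = (i, j)$, while $\beta^i \gamma.X = \beta^{i+j-1}.X \in \mathcal{G}$ holds iff $J \leq i + j - 1$. Plugging these into the definition of $P$ yields $P(\beta, \beta^{i-1}) = \sup_{j \geq 1} b_{ij}$, as designed.

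Because $M$ is free on $\beta$, $\underline{\func}(\beta^i) = \func(\beta)^i$, and so (\ref{eq:LAL}) for this instance collapses to exactly the target inequality with $x = \func(\beta)$. If any $x \in \mathbb{R}_+$ satisfied it, Theorem \ref{theo:LAL} would produce $\Pr(X \in \mathcal{G}) > 0$, contradicting the computation above; hence the inequality fails for every $x$, which is the claim. The only real design choice is arranging the state space so that the two indices of $b_{ij}$ arise correctly: $i$ as the value returned by $g_\beta$ (and thereby indexing the sum over $\alpha \in M$), and $j$ encoded in the auxiliary $\gamma$ over which the supremum in $P(\beta, \alpha)$ is taken. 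Once that is in place, everything else is routine bookkeeping.
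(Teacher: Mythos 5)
Your proposal is correct and takes essentially the same approach as the paper: the same state space $\mathbb{N}_+\times\mathbb{N}_0$, the same one-generator free monoid acting by decrementing the second coordinate, the same $\mathcal{G}$, the same choice $g_\beta((i,1)) = \beta^{i-1}$, and the same contrapositive reading of (\ref{eq:LAL}). The only cosmetic difference is that the paper first normalizes $\sum a_{ij}=1$ while you keep $Z$ and let it cancel in the conditional probability, and you parameterize the supremum by $\gamma=\beta^{j-1}$ rather than $\beta^n$ with $j=n+1$.
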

	\begin{corl}\label{corl:seq}
		Let $(a_j)_{j=1}^\infty$ be a sequence of non-negative real numbers such that $0 < \sum_{j=1}^\infty a_j < \infty$. Let $k$ be a positive integer. Then
		$$\sup_{j \geq 1} \left(\frac{a_j}{\sum_{i = 1}^{j+k-1} a_i}\right) \, > \, \frac{1}{ek}.$$
	\end{corl}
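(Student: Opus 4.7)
The plan is to apply Theorem \ref{theo:array} to a two-dimensional array that concentrates all mass on the $k$-th row. Given the sequence $(a_j)$, define $a_{ij} := a_j$ if $i = k$ and $a_{ij} := 0$ otherwise. The hypothesis $0 < \sum_j a_j < \infty$ transfers verbatim. With this choice $b_{ij} = 0$ whenever $i \neq k$, while
$$b_{kj} \;=\; \frac{a_j}{\sum_{\ell = 1}^{k + j - 1} a_\ell}.$$
Writing $c := \sup_{j \geq 1} b_{kj}$, the conclusion of Theorem \ref{theo:array} collapses to the single-variable inequality
$$1 + c\,x^k \;>\; x \quad \text{for every } x \in \mathbb{R}_+,$$
and the task is reduced to extracting a lower bound on $c$ from this.

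The case $k = 1$ is immediate: $(c - 1)x + 1 > 0$ for all $x > 0$ forces $c \geq 1 > 1/e$. For $k \geq 2$ I would minimize the convex function $f(x) := c\,x^k - x + 1$ on $(0, \infty)$. Its unique critical point is $x_0 = (ck)^{-1/(k-1)}$, and the substitution $c x_0^{k-1} = 1/k$ yields the clean identity $f(x_0) = 1 - (1 - 1/k)\,x_0$. Since Theorem \ref{theo:array} forces $f(x_0) > 0$, rearranging gives
$$c \;>\; \frac{(1 - 1/k)^{k-1}}{k}.$$

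The final step is the elementary inequality $(1 - 1/k)^{k-1} > 1/e$ for every integer $k \geq 2$, which is just the standard bound $(1 + 1/n)^n < e$ rewritten with $n = k - 1$. Chaining the two bounds gives $c > 1/(ek)$, as desired. I do not anticipate any real obstacle: the only mildly delicate point is recognizing that the universal quantifier over $x$ in Theorem \ref{theo:array} must be applied precisely at the minimizer $x_0$ of $f$, which is where the inequality is tightest and therefore where the extracted bound on $c$ is sharp.
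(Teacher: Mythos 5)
Your proposal is correct and follows essentially the same route as the paper: both concentrate the array mass on the $k$-th row, apply Theorem~\ref{theo:array} to collapse to the single inequality $1 + cx^k > x$ for all $x>0$, and then extract $c > (k-1)^{k-1}/k^k > 1/(ek)$. The only difference is that you spell out the one-variable minimization and the $k=1$ case explicitly, where the paper simply asserts the equivalence as ``easy to check.''
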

	\begin{proof}[Proof of Corollary \ref{corl:seq}]
		It is easy to check that if $k > 1$ and $s \in \mathbb{R}_+$, then for every $x \in \mathbb{R}_+$ we have
		$$1 + s x^k > x$$
		if and only if
		$$s > \frac{(k-1)^{k-1}}{k^k} > \frac{1}{ek}.$$
		
		Let
			$$c_{ij} \coloneqq
				\begin{cases}
					0 &\mbox{if } i \neq k; \\
					a_j & \mbox{otherwise}. \end{cases}$$
		Then by Theorem \ref{theo:array} applied to $(c_{ij})_{i, j = 1}^\infty$ for every $x \in \mathbb{R}_+$ we have
		$$1 + \left(\sup_{j \geq 1} \left(\frac{a_j}{\sum_{i = 1}^{j+k-1} a_i}\right)\right) x^k \,>\, x.$$
		Therefore,
		$$\sup_{j \geq 1} \left(\frac{a_j}{\sum_{i = 1}^{j+k-1} a_i}\right) \, > \, \frac{1}{ek},$$
		as desired.
	\end{proof}
	\begin{proof}[Proof of Theorem \ref{theo:array}]
		Upon replacing each $a_{ij}$ with $\left.a_{ij}\middle/\left(\sum_{i, j = 1}^\infty a_{ij}\right)\right.$ we may assume that
		$$\sum_{i, j = 1}^\infty a_{ij} = 1.$$
		Let $\mathbb{N}_0$ denote the set of non-negative integers, and $\mathbb{N}_+$ denote the set of positive integers. Let $a_{i, 0} \coloneqq 0$ for every $i \in \mathbb{N}_+$. Let $\{\beta\}^*$ be the free monoid with one generator. Then $\{\beta\}^*$ acts on $\mathbb{N}_+ \times \mathbb{N}_0$ with
		$$\beta. (i, j) \coloneqq \begin{cases}
					(i, j - 1) &\mbox{if } j > 0; \\
					(i, j) & \mbox{otherwise}. \end{cases}$$
		Let $\mathcal{G} \coloneqq \mathbb{N}_+ \times \{0\}$. Since $\sum_{(i, j) \in \mathbb{N}_+ \times \mathbb{N}_0} a_{ij} = 1$, we may consider a random variable $(I, J) \in \mathbb{N}_+ \times \mathbb{N}_0$ such that
		$$\Pr((I, J) = (i, j)) = a_{ij}.$$
		For $i \in \mathbb{N}_+$ let $g_\beta((i, 1)) \coloneqq i - 1$. Then for any $i \in \mathbb{N}_+$ we have
		$$
			P(\beta, \beta^{i-1}) = \sup_{n \geq 0} \Pr\left( I = i \text{ and } J - n - 1 = 0 \,\middle\vert\, J - n - i \leq 0 \right) = \sup_{n \geq 0} \frac{a_{i, n+1}}{\sum_{k = 1}^\infty \sum_{\ell = 1}^{n + i} a_{k \ell}} =
			\sup_{j \geq 1} b_{i j}.
		$$
		Since $\Pr((I, J) \in \mathcal{G}) = 0$, the LAL gives us that for every $x \in \mathbb{R}_+$
		$$x\,<\, 1+ \sum_{i=1}^\infty P(\beta, \beta^{i-1}) x^i = 1 + \sum_{i = 1}^\infty \left(\sup_{j\geq 1} b_{ij}\right)x^i,$$
		as desired.
	\end{proof}
	
	\subsection{Choice functions}
	
	Our last example is a probabilistic application of the LAL. Let $U_1$, \ldots, $U_n$ be a collection of pairwise disjoint non-empty finite sets. A \emph{choice function} $F$ is a subset of $U_1 \cup \ldots \cup U_n$ such that $|F \cap U_k | = 1$ for all $1\leq k \leq n$. A \emph{partial choice function} $P$ is a subset of $U_1 \cup \ldots \cup U_n$ such that $|P \cap U_k | \leq 1$ for all $1\leq k \leq n$. For a partial choice function $P$ let
	$$\dom(P) \coloneqq \{1 \leq k \leq n\,:\,U_k \cap P \neq \emptyset\}.$$
	If $P$ is a partial choice function and $F$ is a choice function, then $P$ \emph{occurs} in $F$ if $P \subseteq F$.
	
	The language of choice functions is very natural for describing combinatrial problems. For example, if $G(V, E)$ is a graph, then we can assign to each vertex $v \in V$ a set $U_v \coloneqq \{(k, v) \,:\, 1 \leq k \leq r\}$. After that assign to each edge $uv \in V$ and $1 \leq k \leq r$ a partial choice function $P_{uv}^k \coloneqq \{(k, u), (k, v)\}$. Then a proper $r$-coloring of $G$ is nothing else but a choice function $F$ such that none of $P_{uv}^k$ occurs in $F$.
	
	A \emph{multichoice function} $M$ is any subset of $U_1 \cup \ldots \cup U_n$. Again, a partial choice function $P$ occurs in a multichoice function $M$ if $P \subseteq M$. Suppose that we are given a finite family $P_1$, \ldots, $P_m$ of non-empty ``forbidden'' partial choice functions. Clearly, there exists a choice function $F$ that avoids all of $P_1$, \ldots, $P_m$ if and only if there exists a multichoice function $M$ such that
	\begin{equation}\label{eq:multi}
		|M \cap U_k| \geq 1 + |\{1 \leq j \leq m\,:\, \dom(P_j)\ni k \text{ and } P_j \subseteq M\}|
	\end{equation}
	for all $1 \leq k \leq n$. Indeed, if $F$ is a choice function that avoids all of $P_1$, \ldots, $P_m$, then we can take $M = F$. If, on the other hand, $M$ satisfies (\ref{eq:multi}), then for each $1\leq k \leq n$ there is an element
	$$u_k \in (M \cap U_k) \setminus \left(\bigcup_{P_j \subseteq M} P_j\right).$$
	Then the set $\{u_1, \ldots, u_n\}$ is a choice function avoiding $P_1$, \ldots, $P_m$.
	
	The theorem that we are going to prove asserts that in fact it is enough to prove (\ref{eq:multi}) \emph{on average} for some random multichoice function $M$.
	
	\begin{theo}
		Let $U_1$, \ldots, $U_n$ be a collection of pairwise disjoint non-empty finite sets, and let $P_1$, \ldots, $P_m$ be a family of non-empty partial choice functions. Let $M_k \subseteq U_k$ be a random subset of $U_k$ for each $1 \leq k \leq n$. Suppose that the variables $M_k$ are mutually independent. Let $M \coloneqq \bigcup_{k=1}^n M_k$. If
		\begin{equation}\label{eq:multimean}
			\mathbb{E}|M \cap U_k| \geq 1 + \mathbb{E}|\{1 \leq j \leq m\,:\, \dom(P_j)\ni k \text{ and } P_j \subseteq M\}|			
		\end{equation}
		for all $1 \leq k \leq n$, then there exists a choice function $F$ in which none of $P_1$, \ldots, $P_m$ occur.
	\end{theo}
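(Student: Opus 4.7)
The plan is to apply the Local Action Lemma. Take
\[
  \mathcal{X} := \{(T, M) : T \subseteq \{1, \ldots, n\},\ M \subseteq \textstyle\bigcup_{k \in T} U_k\},
\]
equipped with the discrete $\sigma$-algebra, and let the monoid be $\mathcal{P}(\{1, \ldots, n\})$ with union as multiplication and $\emptyset$ as identity, generated by the singletons (identified with indices $k$). The action $S.(T, M) := (T \setminus S, M \setminus \bigcup_{k \in S} U_k)$ is measurable. Define the good set
\[
  \mathcal{G} := \{(T, M) \in \mathcal{X} : |M \cap U_k| \geq 1 + |\{j : \dom(P_j) \subseteq T,\ k \in \dom(P_j),\ P_j \subseteq M\}|\ \text{for all } k \in T\}.
\]
Closure of $\mathcal{G}$ under the action is immediate because restricting $T$ and $M$ only weakens the right-hand side, while any pair $(\{1, \ldots, n\}, M) \in \mathcal{G}$ yields a choice function avoiding every $P_j$ by the discussion preceding the theorem. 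Setting $X := (\{1, \ldots, n\}, \bigcup_k M_k)$, the LAL starting condition $\Pr(\{1, \ldots, n\}.X \in \mathcal{G}) = 1$ is automatic since $\{1, \ldots, n\}.X = (\emptyset, \emptyset)$ lies in $\mathcal{G}$ vacuously, so it suffices to exhibit $\func : \{1, \ldots, n\} \to \mathbb{R}_+$ verifying (\ref{eq:LAL}).

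For $(T, M) \notin \mathcal{G}$ with $k.(T, M) \in \mathcal{G}$, the failure is either degenerate ($M \cap U_k = \emptyset$) or witnessed by elements of $J(T, M, k) := \{j : k \in \dom(P_j) \subseteq T,\ P_j \subseteq M\}$, which then satisfies $|J| \geq |M \cap U_k| \geq 1$. In the first case set $g_k(T, M) := \emptyset$; in the second, select a canonical ``overflow'' witness $j^* \in J$ via a pigeonhole pairing between $J$ and $M \cap U_k$ (using a fixed total order on $U_k$) and set $g_k(T, M) := \dom(P_{j^*}) \setminus \{k\}$. Take $\func(k) := \mathbb{E}|M \cap U_k|$, which is $\geq 1$ by (\ref{eq:multimean}), so $\underline{\func}(\alpha) = \prod_{k' \in \alpha} \func(k')$.

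The central estimate is
\[
  P(k, S) \leq \frac{\sum_{j : \dom(P_j) = S \cup \{k\}} \Pr(P_j \subseteq M)}{\prod_{k' \in S \cup \{k\}} \mathbb{E}|M \cap U_{k'}|}.
\]
The conditioning event $(S \cup \{k\} \cup \gamma).X \in \mathcal{G}$ in the definition of $P(k, S)$ depends only on $M_{k'}$ with $k' \notin S \cup \{k\} \cup \gamma$, so by independence of the $M_{k'}$'s the conditional probability that a given $P_j$ with $\dom(P_j) = S \cup \{k\}$ is a subset of $\gamma.X$ equals $\Pr(P_j \subseteq M)$; the additional factor $\prod_{k' \in S \cup \{k\}} \mathbb{E}|M \cap U_{k'}|^{-1}$ is expected to come from the pigeonhole selection rule for $j^*$, which spreads the selection probability across the candidate elements in each $U_{k'}$. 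Multiplying by $\underline{\func}(S \cup \{k\})$ and summing over $S$ collapses the right-hand side of (\ref{eq:LAL}) to $1 + \sum_{j : k \in \dom(P_j)} \Pr(P_j \subseteq M) = \mathbb{E}|M \cap U_k| = \func(k)$, exactly by (\ref{eq:multimean}). The hard part is establishing the sharpened bound on $P(k, S)$: the naive union bound is too loose by the factor $\prod_{k' \in S \cup \{k\}} \mathbb{E}|M \cap U_{k'}|$, and recovering it requires a careful pigeonhole-style matching, together with handling the degenerate $M \cap U_k = \emptyset$ contribution to $P(k, \emptyset)$ within the same estimate.
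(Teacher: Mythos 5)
Your plan is built around keeping the raw random multichoice $M$ as the random variable and hoping to recover the missing factor $\prod_{k'} \mathbb{E}|M\cap U_{k'}|$ through the definition of $g_k$. This diverges from the paper, and the divergence introduces a genuine obstruction that you cannot fix within your framework.

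The problem is the degenerate case you flag at the end. In your set $\mathcal{G}$ the constraint at a live index $k\in T$ requires $|M\cap U_k|\geq 1$, and the event $M_k=\emptyset$ has positive probability in general while being independent of the conditioning event $(\{k\}\cup\gamma).X\in\mathcal{G}$. Hence $P(k,\emptyset)\geq\Pr(M_k=\emptyset)$, and your claimed estimate
\[
P(k,S)\ \leq\ \frac{\sum_{j:\dom(P_j)=S\cup\{k\}}\Pr(P_j\subseteq M)}{\prod_{k'\in S\cup\{k\}}\mathbb{E}|M\cap U_{k'}|}
\]
is simply false for $S=\emptyset$. Concretely, take $n=1$, $U_1=\{a,b\}$, $\Pr(a\in M_1)=\Pr(b\in M_1)=1/2$ independently, and no $P_j$'s at all. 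Then $\mathbb{E}|M\cap U_1|=1$, so $\func(1)=1$, while $P(1,\emptyset)\geq\Pr(M_1=\emptyset)=1/4$; your bound would give $P(1,\emptyset)\leq 0$. The LAL inequality would then require $1\geq 1+\tfrac14\cdot 1$, which fails. No pigeonhole refinement of $g_k$ can rescue this, because the degenerate failure $M_k=\emptyset$ produces no candidate $j^*$ to match against and does not scale with any product over $U_{k'}$.

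The paper sidesteps this entirely: the random variable is not the multichoice $M$ but a random \emph{choice function}. Setting $p(u)=\Pr(u\in M)$ and $\func(k)=\sum_{u\in U_k}p(u)=\mathbb{E}|M\cap U_k|$, it picks exactly one $u\in U_k$ per $k$ with rescaled probability $p'(u)=p(u)/\func(k)$, so that the $p'(u)$ sum to $1$ on each $U_k$. Since a choice function always meets each $U_k$ exactly once, there is no degenerate case, and $\mathcal{G}$ is just the set of partial choice functions avoiding all $P_j$. The factor $\prod_{s\in\dom(P_j)}\func(s)$ that you were trying to squeeze out of $g_k$ appears automatically when you expand $\Pr(P_j\subseteq X)=\prod_{u\in P_j}p'(u)$ and multiply by $\underline{\func}(\dom(P_j))$: it exactly cancels the normalization in $p'$, collapsing the LAL inequality to the hypothesis (\ref{eq:multimean}). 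The normalization of the probability measure, not a clever choice of $g_k$, is the missing idea.
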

	\begin{proof}
		Let $[n]$ denote the set $\{1, \ldots, n\}$. For $u \in U_1 \cup \ldots \cup U_n$ let $p(u) \coloneqq \Pr(u \in M)$. Then
		$$\mathbb{E}|M \cap U_k| = \sum_{u \in U_k} p(u),$$
		and
		\begin{align*}
			\mathbb{E}|\{1 \leq j \leq m\,:\, \dom(P_j)\ni k \text{ and } P_j \subseteq M\}| = 
			\sum_{\dom(P_j)\ni k} \Pr(P_j \subseteq M) = \sum_{\dom(P_j)\ni k} \prod_{u \in P_j} p(u).
		\end{align*}
		So (\ref{eq:multimean}) says that
		\begin{equation}\label{eq:sums}
			\sum_{u \in U_k} p(u) \geq 1 + \sum_{\dom(P_j)\ni k} \prod_{u \in P_j} p(u).
		\end{equation}
		
		Let $\func(k) \coloneqq \sum_{u \in U_k} p(u)$ and $p'(u) \coloneqq p(u)/\func(k)$ for $1 \leq k \leq n$ and $u \in U_k$. Then (\ref{eq:sums}) can be rewritten as
		\begin{equation}\label{eq:sums1}
			\func(k) \geq 1 + \sum_{\dom(P_j)\ni k} \left(\prod_{u \in P_j} p'(u)\right) \left(\prod_{s \in \dom(P_j)}\func(s)\right).
		\end{equation}
		
		Let $\mathcal{X}$ be the set of all partial choice functions, and $\mathcal{G} \subseteq \mathcal{X}$ be the set of all partial choice functions in which none of $P_1$, \ldots, $P_m$ occur. Choose a random choice function $X$, picking an element $u \in U_k$ with probability $p'(u)$ and making choices for different $k$ independently. Then
		$$\Pr(P_j \subseteq X) = \prod_{u \in P_j} p'(u),$$
		so (\ref{eq:sums1}) turns into
		\begin{equation}\label{eq:sums2}
			\func(k) \geq 1 + \sum_{\dom(P_j)\ni k} \Pr(P_j \subseteq X) \left(\prod_{s \in \dom(P_j)}\func(s)\right).
		\end{equation}	
		
		We want to show that $\Pr(X \in \mathcal{G}) > 0$. Let $[n]$ denote the set $\{1, \ldots, n\}$. Then $\mathcal{P}([n])$ is a commutative monoid (with multiplication given by the union operation) generated by the set of singletons, that acts on $\mathcal{X}$ with
		$$S.P \coloneqq P \setminus \left(\bigcup_{k \in S} U_k\right).$$
		Clearly, $\mathcal{G}$ is closed under the $\mathcal{P}([n])$-action. Also note that $\Pr([n].X \in \mathcal{G}) = 1 > 0$.
		
		If $P$ is a partial choice function and $P \not \in \mathcal{G}$, but $k.P \in \mathcal{G}$, then for some $1 \leq j \leq m$ we have $P_j \subseteq P$ and $P_j \cap U_k \neq \emptyset$. Then let
		$$g_k(P) \coloneqq \dom(P_j).$$
		If there is more than one such $P_j$, then choose any one of them. Now for $1 \leq k \leq n$ and for $S \in \mathcal{P}([n])$ we have $P(k, S) \neq 0$ only if $k \in S$ and $S = \dom(P_j)$ for some $1\leq j \leq m$. In the latter case let $\Pi(S)$ be the set of all indices $1\leq j \leq m$ such that $S = \dom(P_j)$. Then
		\begin{align*}
		P(k, S) &= \sup_{V \subseteq [n]} \Pr\left( g_k\left(X\setminus \left(\bigcup_{k \in V} U_k\right)\right) = S \,\middle\vert\, X \setminus \left(\bigcup_{k \in V\cup S} U_k\right) \in \mathcal{G} \right)\\
		&\leq \sup_{V \subseteq [n]} \sum_{j \in \Pi(S)} \Pr\left( P_j \subseteq X\setminus \left(\bigcup_{k \in V} U_k\right) \,\middle\vert\, X\setminus\left(\bigcup_{k \in V\cup S} U_k\right) \in \mathcal{G} \right) \\
		& = \sum_{j \in \Pi(S)} \Pr(P_j \subseteq X).
		\end{align*}
		Hence (\ref{eq:LAL}) in this case turns into (\ref{eq:sums2}), and we are done.
	\end{proof}
	
	\section*{Aknowledgements}
	
	This work is supported by the Illinois Distinguished Fellowship.


\begin{thebibliography}{99}\setlength{\itemsep}{-0.001mm}
	
	\bibitem{Alon3} N.~Alon, J.~Grytczuk, M.~Ha\l{}uszczak, and O.~Riordan. \emph{Nonrepetitive colorings of graphs}. Random Structures \& Algorithms, Volume 21, Issue 3--4, 2002. Pages 336--346.
	
	\bibitem{Alon1} N.~Alon, C.~McDiarmid, and B.~Reed. \emph{Acyclic coloring of graphs}. Random structures and algorithms, Volume 2, No. 3, 1991. Pages 277--288.
	
	\bibitem{AS} N.~Alon, J.H.~Spencer. \emph{The Probabilistic Method}. Wiley, New York, 1992.
	
	\bibitem{Alon2}  N.~Alon, B.~Sudakov, and A.~Zaks. \emph{Acyclic edge colorings of graphs}. J. Graph Theory, Volume 37, 2001. Pages 157--167.
	
	\bibitem{Bissacot} R.~Bissacot, R.~Fern\'{a}ndez, A.~Procacci, and B.~Scoppola. \emph{An improvement of the Lov\'{a}sz Local Lemma via cluster expansion}. J. Combinatorics, Probability and Computing, Volume 20, Issue 5, 2011. Pages 709--719.
	
	\bibitem{Duj} V.~Dujmovi\'{c}, G.~Joret, J.~Kozik, and D.R.~Wood. \emph{Nonrepetitive Colouring via Entropy Compression}. \href{http://arxiv.org/abs/1112.5524}{arXiv:1112.5524}.
	
	\bibitem{Erdos} P.~Erd\H{o}s, J.~Spencer. \emph{Lopsided Lov\'{a}sz Local Lemma and latin transversals}. Discrete Applied Mathematics, Volume 30, Issue 2--3, 1991. Pages 151--154.  	
	
	\bibitem{Esperet} L.~Esperet, A.~Parreau. \emph{Acyclic edge-coloring using entropy compression}. European J. Combin., Volume 34, Issue 6, 2013. Pages 1019--1027.
	
	\bibitem{Fiamcik} J.~Fiam\v{c}ik. \emph{The acyclic chromatic class of a graph} (in Russian). Math. Slovaca, Volume 28, 1978. Pages 139--145.
	
	\bibitem{Goncalves} D.~Gon\c{c}alves, M.~Montassier, and A.~Pinlou. \emph{Entropy compression method applied to graph colorings}. \href{http://arxiv.org/abs/1406.4380}{arXiv:1406.4380}.
	
	\bibitem{Grunbaum}  B.~Gr\"{u}nbaum. \emph{Acyclic colorings of planar graphs}. Israel Journal of Mathematics, Volume 14, Issue 4, 1973. Pages 390--408.
	
	\bibitem{Grytczuk} J.~Grytczuk, J.~Kozik, and P.~Micek. \emph{New approach to nonrepetitive sequences}. Random Structures \& Algorithms, Volume 42, Issue 2, 2013. Pages 214--225.
	
	\bibitem{Grytczuk1} J.~Grytczuk. \emph{Nonrepetitive colorings of graphs---a survey}. International Journal of Mathematics and Mathematical Sciences, Volume 2007, 2007.
	
	\bibitem{Haranta} J.~Haranta, S.~Jendrol'. \emph{Nonrepetitive vertex colorings of graphs}. Discrete Mathematics, Volume 312, Issue 2, 2012. Pages 374--380.
	
	\bibitem{Kim} J.H.~Kim. \emph{The Ramsey number $R(3, t)$ has order of magnitude $t^2/\log t$}. Random Structures \& Algorithms, Volume 7, Issue 3, 1995. Pages 173--207.
	
	\bibitem{Kolipaka1} K.~Kolipaka, M.~Szegedy. \emph{Moser and Tardos meet Lov\'{a}sz}. STOC '11 Proceedings of the forty-third annual ACM symposium on Theory of computing, 2011. Pages 235--244.
	
	\bibitem{Kolipaka} K.~Kolipaka, M.~Szegedy, and Y.~Xu. \emph{A sharper local lemma with improved applications}. Approximation, Randomization, and Combinatorial Optimization, Volume 7408, 2012. Pages 603--614.
	
	\bibitem{Molloy} M.~Molloy, B.~Reed. \emph{Further algorithmic aspects of the Local Lemma}. Proceedings of the 30th Annual ACM Symposium on Theory of Computing, 1998. Pages 524--529.
	
	\bibitem{Moser} R.~Moser, G.~Tardos. \emph{A constructive proof of the general Lov\'{a}sz Local Lemma}. J. ACM, Volume 57, Issue 2, 2010.
	
	%\bibitem{Muthu} R.~Muthu, N.~Narayanan, and C.R.~Subramanian. \emph{Improved bounds on acyclic edge colouring}. J. Discrete Math., Volume 307, Issue 23, 2007. Pages 3063--3069.
	
	\bibitem{Ndreca} S.~Ndreca, A.~Procacci, and B.~Scoppola. \emph{Improved bounds on coloring of graphs}. European J. Combin., Volume 33, Issue 4, 2012. Pages 592--609.
	
	\bibitem{Pegden} W.~Pegden. \emph{An extension of the Moser--Tardos Algorithmic Local Lemma}. SIAM J. Discrete Math., Volume 28, Issue 2, 2014. Pages 911--917.
	
	\bibitem{Thue} A.~Thue. \emph{\"{U}ber unendliche zeichenreichen}. Norske Vid. Selsk. Skr., I Mat. Nat. Kl., 1906. Pages 1--22.
		
	\end{thebibliography}
\end{document}